 \let\theoremstyle\relax
\numberwithin{equation}{section}
\DeclareMathAlphabet{\mathpzc}{OT1}{pzc}{m}{it}
\def\R{\hbox{\bf\rlap{I}{\hbox to 2 pt{}}R}}
\theoremstyle{theorem}
\newtheorem{thm}{Theorem}[section]
\newtheorem{lem}[thm]{Lemma}
\newtheorem{prop}[thm]{Proposition}
\newtheorem{cor}[thm]{Corollary}
\newtheorem{defn}[thm]{Definition}
\theoremstyle{remark}
\newtheorem{rem}[thm]{Remark}
 \title{Global Classical Solutions Near Vacuum to the Initial-Boundary Value Problem of Isentropic Supersonic Flows through Divergent Ducts}
  \author{
  Ying-Chieh Lin\footnote{Department of Applied Mathematics, National University of Kaohsiung, Kaohsiung 81148, Taiwan,
  	  E-mail: linyj@nuk.edu.tw},\ \
  Jay Chu\footnote{Department of Mathematics, National Tsing Hua University, Hsinchu 30013, Taiwan,
      E-mail: ccchu@math.nthu.edu.tw},\ \
  John M. Hong\footnote{Department of Mathematics, National Central University, Chung-Li 32001, Taiwan,
      E-mail: jhong@math.ncu.edu.tw},\ and \
 Hsin-Yi Lee\footnote{Department of Mathematics, National Central University, Chung-Li 32001, Taiwan,
      E-mail: mathlee1209@gmail.com}}
\begin{document}
\date{}
\maketitle


\medskip

\medskip

\begin{abstract}

In this paper, we study the global existence and asymptotic behavior of classical solutions near vacuum for the initial-boundary value problem modeling isentropic supersonic flows through divergent ducts. The governing equations are the compressible Euler equations with a small parameter, which can be written as a hyperbolic system  in terms of the Riemann invariants with a non-dissipative source. We provide a new result for the global existence of classical solutions to initial-boundary value problems of non-dissipative hyperbolic balance laws without the assumption of small data. The work is based on the local existence, the maximum principle, and the uniform a priori estimates obtained by the generalized Lax transformations. The asymptotic behavior of classical solutions is also shown by studying the behavior of Riemann invariants along each characteristic curve and vertical line. The results can be applied to the spherically symmetric solutions to $N$-dimensional compressible Euler equations. Numerical simulations are provided to support our theoretical results.

\vspace{3mm}
\textbf{Key words}: Classical solutions; Vacuum; Initial-boundary value problem; Isentropic supersonic flows; Divergent ducts; Compressible Euler equations; Hyperbolic systems of balance laws; Riemann invariants; Uniform a priori estimates; Generalized Lax transformations; Spherically symmetric solutions; Riccati equations. \\

\textbf{AMS subject classifications}. 35L45, 35L65, 35L67, 35L81.

\end{abstract}

\section{Introduction}
\setcounter{section}{1}

In this paper, we study the classical solution near vacuum to the initial-boundary value problem for the one-dimensional isentropic supersonic flow through a divergent duct, which can be modeled as the following initial-boundary value problem for compressible Euler equations with a small parameter $0 < \eta \ll 1$ :
\begin{equation}
\label{a.1}
\begin{cases}
(\rho_\eta)_{t} + (\rho_\eta v)_{x} = -\frac{a'(x)}{a(x)} \rho_\eta  v,& x \in \mathring{I}, \ \  t >0, \\
\rho_{\eta}(v_{t} + vv_{x}) + (P(\rho_\eta))_{x} = 0,&  x \in \mathring{I}, \ \  t >0, \\
\rho_\eta (x,0) = \eta \rho_{0}(x), \ \ v(x,0) = v_{0}(x),&  x \in I, \\
\rho_\eta (x_B,t) = \eta \rho_{B}(t), \ \ v(x_B,t) = v_{B}(t),&  t \geq 0,
\end{cases}
\end{equation}
where $I$ is an interval representing either $[x_B, x_C]$ or $[x_B, \infty)$, $\mathring{I}$ denotes the interior of $I$, $\rho_\eta$ is the density of gas, $v$ is the velocity, $a=a(x)\in C^2(I)$ is the cross-sectional area of a duct, and $P$ is the pressure of gas satisfying the $\gamma$-law: $P(\rho)=\rho^{\gamma}, \ \ 1< \gamma <3$. We say the flow is supersonic if $|v|>\sqrt{P'(\rho_\eta)}$. In addition, we assume that $a(x) > 0$ and $a'(x)\ge 0$ for all $x \in I$. Following \cite{Chu}, we define the new parameter
\begin{equation*}
\label{a.4}
\nu := \frac{\gamma}{\gamma - 1} \eta^{\gamma -1}.
\end{equation*}
Then using the rescaling
\begin{equation}
\label{a.5}
\rho_{\eta} (x,t) = \left( \frac{\gamma -1}{\gamma} \nu \right)^{\frac{1}{\gamma-1}} \rho (x,t),
\end{equation}
problem \eqref{a.1} can be reformulated as the following:
\begin{equation}
\label{a.8}
\begin{cases}
\rho_{t} + (\rho v)_{x} =-\frac{a'(x)}{a(x)} \rho v ,& x \in \mathring{I}, \ \  t >0, \\
v_{t} + ( \frac{v^2}{2} + \nu \rho^{\gamma-1} )_{x} = 0,&  x \in \mathring{I}, \ \  t >0, \\
\rho(x,0) = \rho_{0}(x), \ \ v(x,0) = v_{0}(x),&  x \in I, \\
\rho(x_B,t) = \rho_{B}(t), \ \ v(x_B,t) = v_{B}(t),&  t \geq 0.
\end{cases}
\end{equation}
The purpose of the paper is twofold. One is to establish the global existence of the classical solution near vacuum ($0 < \eta \ll 1$) to problem
\eqref{a.1} or, equivalently, the global existence of the classical solution to problem \eqref{a.8}, and apply these results to the initial-boundary value problem of compressible Euler equations in $N$-dimensional spherically symmetric space-times. The other is to obtain the asymptotic behavior of the classical solution along each characteristic curve and vertical line.

We recall some previous results related to the global classical solutions or the singularity formation to hyperbolic systems of conservation or balance laws. The global existence of classical solutions of the Cauchy problem for $2 \times 2$ conservation laws can be seen in the Li's book \cite{TLi1}, or in the earlier paper \cite{Yamaguti} written by Yamaguti and Nishida. The system in \cite{TLi1} (or \cite{Yamaguti}) was written as a canonical form in terms of the Riemann invariants. In \cite{TLi1}, the global existence result was achieved by the local existence of the solution shown in \cite{TLi3} and the uniform a priori estimate for the solution. The transformation introduced by Lax \cite{Lax} was used to transform the equations for the first derivatives of Riemann invariants into the Riccati equations. To the dissipative system
\begin{equation}
\label{a.15}
u_t+\Lambda(x,t,u) u_x+g(x,t,u)=0,
\end{equation}
where $\Lambda(x,t, u)$ is the diagonal $n \times n$ matrix and $g(x,t,u)$ satisfies the dissipative condition, the global existence of classical solutions was established by Li and Qin \cite{TLi2} when $\Lambda(x,t,u)=\Lambda(u)$ and $g(x,t,u)=g(u)$. The result was obtained under the assumption that the $C^1$ norm of the initial data is small. In \cite{Hsiao}, under the smallness of the $C^1$ norm of initial data, Hsiao and Li extended the global existence result to the dissipative system whose $\Lambda(x,t,u) u_x$ is replaced by $f(u)_x$ and $g(x,t,u)=g(u)$. In \cite{Yang2}, Yang, Zhu, and Zhao studied the global classical solution of \eqref{a.15} when the $C^0$ norm of the initial data is small. The global existence result to the damping $p$-system with the small $C^0$ norm of initial data were obtained by Wang and Li \cite{Wang}. In \cite{Jiang}, Jiang, Li, and Ruan considered an initial-boundary value problem with nonlinear damping and the null Dirichlet boundary condition. The global classical solution was obtained if the $C^0$ norm of the first derivative of initial data is small. Ying, Yang, and Zhu in \cite{Ying} studied the axisymmetric and spherically symmetric solutions to the multi-dimensional compressible Euler equations, which can be reformulated as a $2 \times 2$ hyperbolic system of balance laws with damping. They established the global existence of classical solutions when the damping is strong enough. To the systems with relaxation, the global classical solution of the Cauchy problem was obtained by Zhu \cite{Zhu} and Yang and Zhu \cite{Yang1} under the different assumptions of small data. In \cite{Li1}, Li and Liu obtained the critical thresholds of a relaxation model by providing the conditions for the global regularity or finite-time singularity of solutions. The critical thresholds of pressureless Euler-Poisson equations was first given by Engelberg, Liu, and Tadmor \cite{Engelberg}, and recently studied by Bhatnagar and Liu \cite{Bhatnagar}. To the related topics on Euler-Poisson equations, we refer the readers to the papers by Luo and Smoller \cite{Luo1} or Luo, Xin, and Zeng \cite{Luo2}. To the steady states of multi-dimensional compressible Euler equations, we refer the readers to articles by Wang and Xin \cite{Wang2}.

The global classical solutions of compressible Euler equations with general pressure laws in Lagrangian coordinate satisfy the $3 \times 3$ system
\begin{equation}
\label{a.16}
\begin{cases}
v_t-u_x=0,\\
u_t+p(v,s)_x=0,\\
s_t=0,
\end{cases}
\end{equation}
where $s$ is the entropy and $p(v,s)=kv^{-\gamma}\exp(s/c)$ for some positive constant $k$. Since $s = s(x)$, system \eqref{a.16}
can be rewritten as
\begin{equation}
\label{a.17}
\begin{cases}
v_t-u_x=0,\\
u_t+P(v,x)_x=0
\end{cases}
\end{equation}
for some function $P$. The global existence of classical solutions to an initial-boundary value problem of \eqref{a.16} was established by Lin, Liu, and Yang \cite{Lin2} under some restrictions on the initial and boundary data. To the Cauchy problem for system \eqref{a.17} with a more general function $P$, Chen and Young \cite{Chen4} studied the mechanism of a singularity formation and the life span of the classical solutions. Chen, Pan, and Zhu in \cite{Chen1} also studied the singularity formation for compressible Euler equations. These results were extended to some related systems by Chen, Young, and Zhang \cite{Chen5}, such as MHD equations and compressible Euler equations for the fluid in variable area ducts. To the subject of fluid near vacuum (or containing vacuum), we refer the readers to \cite{Chu, Lee, TPLiu4, TPLiu5}. To the weak solutions of hyperbolic conservation laws, we refer the readers to \cite{GChen2, Glimm1, Hong, Lee, TPLiu3, Makino, Temple, Tsuge3}.

The above results give a significant contribution to the global existence of classical solutions, the singularity formation of solutions, and the critical thresholds for either hyperbolic dissipative (damping) or relaxation systems. The smallness conditions on the initial or boundary data were given in most of these results. However, these results cannot be applied to our problem by the following facts. First, the $C^0$ and $C^1$ norms of the initial data in this paper can be large, which conflicts to their assumptions. Moreover, in some papers dealing with the compressible Euler equations in Lagrangian coordinates, they gave the assumptions on the smallness of $C^0$ or $C^1$ norms. It implies that the density is away from vacuum, which is different from our concern on the near-vacuum solutions. Second, our system given in \eqref{RIE} is not dissipative, and the source terms of the system do not have the relaxation effect. It makes this problem much difficult when we use the uniform a priori estimates to obtain the uniform $C^1$ norms of solutions. We note that, by letting the new unknown $\tilde{\rho} := a \rho$, equations in \eqref{a.8} can be written as the following system without the source term
\begin{equation}
\label{a.18}
\begin{cases}
\tilde{\rho}_t+(\tilde{\rho}v)_x=0,\\
v_t+(\frac{1}{2}v^2+\nu a^{1-\gamma}\tilde{\rho}^{\gamma-1})_x=0.
\end{cases}
\end{equation}
The shock formation of \eqref{a.18} was studied in \cite{Chen5}, but there are very limited results related to the global existence of classical solutions to \eqref{a.18}. The goal of this paper is to give a new result on the global existence of the classical solution near vacuum to problem \eqref{a.1} without small data of Riemann invariants, and study its asymptotic behavior.

In this paper, we write the first equation of \eqref{a.1} (or, equivalently, \eqref{a.8}) describing the conservation of mass as a hyperbolic equation with the source term. In order to obtain the global existence of the classical solution for problem \eqref{a.8}, we rewrite it as problem \eqref{RIE} in terms of Riemann invariants $S$ and $R$ defined by
\begin{equation}
\label{RI}
S(U) = v - 2\sqrt{\frac{\nu}{\gamma -1}} \rho^{\frac{\gamma-1}{2}} \ \ \text{and} \ \ \
R(U) = v + 2\sqrt{\frac{\nu}{\gamma -1}} \rho^{\frac{\gamma-1}{2}},
\end{equation}
where $U := [\rho, v]^T$ is the solution of initial-boundary value problem \eqref{a.8}. The work is based on the local existence, the maximum principle, and the uniform a priori estimates obtained by the generalized Lax transformations. With the aid of \cite{TLi3}, we establish the local existence of classical solutions of problem \eqref{RIE}. Furthermore, we find that $S$ is increasing along the first characteristic field and $R$ is decreasing along the second characteristic field. This leads to the maximum principle so that we have the uniform $C^0$ norms of $S$ and $R$. To obtain the uniform $C^1$ norms of $S$ and $R$, we provide a new version of uniform a priori estimates as follows. We use the generalized Lax transformations
\begin{equation*}
\label{a.Lax}
Y = e^{h} S_x + Q_1  \ \ \text{and} \ \ Z = e^{h} R_x + Q_2,
\end{equation*}
where $h = h(S,R)$, $Q_1 = Q_1(S,R)$, and $Q_2(S,R)$ are given in Subsection 2.2, and introduce a variable $\xi := (R/S) -1 > 0$ to transform the equations of $S_x$ and $R_x$ along the first and second characteristic curves, respectively, into two Riccati equations
\begin{equation*}
\label{a.19}
\dot{Y} (= Y_t + \lambda_{1} Y_x ) =  \mathcal{A} Y^2+\mathcal{B}Y+\mathcal{C} \ \ \text{and} \ \ \dot{Z} (= Z_t + \lambda_{2} Z_x ) =  \widehat{\mathcal{A}} Z^2+\widehat{\mathcal{B}}Z+\widehat{\mathcal{C}},
\end{equation*}
where $\mathcal{A} (= \widehat{\mathcal{A}}) <0$, $\widehat{\mathcal{B}}=\mathcal{B}+O(\xi)$, and $\widehat{\mathcal{A}}\widehat{\mathcal{C}}=\mathcal{A}\mathcal{C}+O(\xi)$. It means that the equation of $Z$ can be regarded as a perturbed equation of $Y$ when $\xi$ is sufficiently small. Consequently, when we have a suitable upper and lower bound for $Y$, another suitable upper and lower bound for $Z$ can also be obtained if $\xi$ is sufficiently small. For the equation of $Y$, we impose condition \eqref{k1} of $k(x)$, which is the restriction to the shape of the duct, such that $\mathcal{C} \geq 0$. In this case, the equation of $Y$ can be written as
\begin{equation*}
\label{a.20}
\dot{Y}=\mathcal{A} (Y-Y_1(t))(Y-Y_2(t))
\end{equation*}
for some $Y_1(t)\leq 0 \leq Y_2(t)$. There is an important information from Lemma \ref{key1} that, since $Y_2(t)$ may not be bounded, we give a more precise upper bound of $Y(t)$ depending on the integral of $\mathcal{A}(Y_2-Y_1)^2$, but not the supremum of $Y_2(t)$. Following from the generalized Lax transformation, we obtain in Theorem \ref{GC} that the bound of $S_x$ is controlled by the terms given in inequality \eqref{Sx}. Each of these terms can be carefully treated by the maximum principle and the behavior of $S$ and $R$ along the characteristic curves, which implies that the upper bound of $S_x$ is independent of time. Similar arguments also yield a time-independent bound of $R_x$. Therefore, the global existence of the classical solution to problem \eqref{RIE} is established if initial-boundary data of $S$, $R$, and $k(x)$ satisfy a set of conditions listed in Theorem \ref{GC}. Finally, writing $\rho_\eta$ and $v$ in \eqref{a.1} as the functions of $S$ and $R$, we can easily obtain the global existence of the classical solution to problem \eqref{a.1}.
To the asymptotic behavior of solutions, we can show by the proof of Theorem \ref{GC} that the inequalities for the initial-boundary data of $S$ and $R$ are preserved for all $(x,t) \in I \times [0, \infty)$. In particular, we have the key inequalities:
\begin{equation*}
\max \Big\{ S_t (x,t), R_t(x,t) \Big\} \leq 0 \leq \min \Big\{ S_x (x,t), R_x(x,t) \Big\}
\end{equation*}
for all $(x,t) \in I \times [0, \infty)$. Then, for any fixed $x \in I$, Riemann invariants $S(x,t)$, $R(x,t)$ and velocity $v(x,t)$ decrease to $0$, and density $\rho_{\eta} (x,t)$ tends to $0$ as $t \rightarrow \infty$. The convergence rates of $S(x,t)$, $R(x,t)$, $v(x,t)$ and $\rho^{(\gamma-1)/2}_{\eta} (x,t)$  are $O(1/t)$ as $t \rightarrow \infty$. In addition, if $k(x_{*}) > 0$ for some $x_{*} \in I$, then all the convergences are uniform on $[x_B, x_*]$.
We also prove that $S$ is increasing, and $R$, $\xi$ are decreasing along all characteristic curves. In particular, given a characteristic curve $\Gamma$, $\xi \rightarrow 0$ along $\Gamma$ if, and only if, $\rho_{\eta} \rightarrow 0$ along $\Gamma$.

The paper is organized as follows. In Section 2, we give the local existence of classical solutions to problem \eqref{RIE}, the maximum principle, and Riccati equations derived by the generalized Lax transformations. In Section 3, we establish the global existence of classical solutions to problem \eqref{RIE} and the asymptotic behavior of classical solutions. As an application, the results can be applied to the spherically symmetric solutions to $N$-dimensional compressible Euler equations. Finally, some numerical experiments are presented in Section 4 to support our theoretical results.

\section{Local existence, maximum principle, and Riccati equations for Riemann invariants.}
\setcounter{section}{2}

\subsection{Local existence and maximum principle.}

In this section, we start by studying the solution $U := [\rho, v]^T$ of the initial-boundary value problem \eqref{a.8} in terms of the Riemann invariants $R$ and $S$ defined in \eqref{RI}. In view of \eqref{RI}, it is clear to obtain that
\begin{equation}
\label{rhov}
\rho = \left( \frac{\gamma -1}{16 \nu} \right)^{\frac{1}{\gamma-1}} (R-S)^{\frac{2}{\gamma -1}} \ \ \text{and} \ \ v = \frac{S+R}{2}.
\end{equation}
Hence $S < R$ if, and only if, $\rho >0$. The eigenvalues of the Jacobian matrix of the flux of equations in \eqref{a.8} are
\begin{equation}
\label{eigenvalues}
\lambda_{1}(U) = v - \sqrt{\nu (\gamma -1)}\rho^{\frac{\gamma-1}{2}}   \ \ \text{and} \ \ \
\lambda_{2}(U) = v + \sqrt{\nu (\gamma -1)} \rho^{\frac{\gamma-1}{2}}.
\end{equation}
The gas flow is supersonic in $\Omega$ if $\lambda_{1}(U) >0$ or $\lambda_{2}(U) < 0$ for all $U \in \Omega$. We obtain from \eqref{RI} and \eqref{eigenvalues} that
\begin{equation}
\label{eigenRI}
\lambda_{1} =  \frac{\gamma + 1}{4}S +  \frac{3-\gamma}{4}R  \ \ \text{and} \ \ \
\lambda_{2} =  \frac{3-\gamma}{4}S + \frac{\gamma +1}{4}R .
\end{equation}
As a result, if $S >0$, then $R >0$ and hence $\lambda_{1} >0$ by \eqref{eigenRI} and $1 < \gamma < 3$. In other words, the gas flow is supersonic if $S >0$.

For simplicity, we let $\lambda_1(x,t)$, $\lambda_{2}(x,t)$ $S(x,t)$, and $R(x,t)$ denote $\lambda_1(U(x,t))$, $\lambda_2(U(x,t))$, $R(U(x,t))$, and $S(U(x,t))$, respectively. A direct calculation shows that problem \eqref{a.8} becomes
\begin{equation}
\label{RIE}
\begin{cases}
S_{t}(x,t) + \lambda_{1}(x,t) S_{x}(x,t) = g,\quad &x \in \mathring{I},\ t>0,\\
R_{t}(x,t) + \lambda_{2}(x,t) R_{x}(x,t) = -g,\quad &x \in \mathring{I},\ t>0,\\
S(x,0)=S_0(x),\quad R(x,0)=R_0(x),\quad &x \in I,\\
S(x_B,t)=S_B(t),\quad R(x_B,t)=R_B(t),\quad &t\ge 0,
\end{cases}
\end{equation}
where
\begin{equation}
\label{g}
g = g(x,S,R) = \frac{\gamma-1}{8} \frac{a'(x)}{a(x)} \left(R^2 - S^2 \right).
\end{equation}
We assume that the cross-sectional area of a duct $a(x)$ is a $C^2$ function satisfying $a>0$ and $a' \geq0$ on $I$, and then define
\begin{equation}
\label{k}
k(x) := \frac{a'(x)}{a(x)}
\end{equation}
so that $k(x) \geq 0$ is a $C^1$ function on $I$. This and $S \leq R$ imply that the function $g$ in \eqref{g} is nonnegative.  Consequently, equations in \eqref{RIE} tell us that $S(x,t)$ is increasing along the first characteristic curves
\begin{equation}
\label{Gamma1}
\Gamma^{1}:\frac{dx}{dt} = \lambda_{1} (S (x,t), R (x,t)),
\end{equation}
and $R(x,t)$ is decreasing along the second characteristic curves
\begin{equation}
\label{Gamma2}
\Gamma^{2}:\frac{dx}{dt} = \lambda_{2} (S (x,t), R (x,t)).
\end{equation}

The monotone property of the Riemann invariant $S$ (resp., $R$) along the first (resp., second) characteristic curves results in the maximum principle. Before giving the maximum principle, we first apply the work of Li and Yu in \cite{TLi3} to establish the local existence and uniqueness theorem to initial-boundary value problem \eqref{RIE}.

\begin{thm}[\cite{TLi3}]\label{TTLi}
Suppose that $k(x) \geq 0$, $S_0(x)$, and $R_0(x)$ are $C^1$ functions defined on $I$ with bounded $C^1$ norms and that $S_B(t)$ and $R_B(t)$ are $C^1$ functions defined on $[0, T]$ with bounded $C^1$ norms. Suppose also that $S_0(x_B)= S_B(0)$, $R_0(x_B) = R_B(0)$, and
\begin{equation*}
\begin{aligned}
& S'_{B}(0) + \Big( \frac{\gamma +1}{4} S_0 (x_B) + \frac{3-\gamma}{4} R_0 (x_B) \Big) S'_0 (x_B) = \frac{\gamma -1}{8} k(x_B) \Big( R_0^2(x_B) - S_0^2(x_B) \Big),
\\ & R'_{B}(0) + \Big( \frac{3-\gamma}{4} S_0 (x_B) + \frac{\gamma +1}{4} R_0 (x_B) \Big) R'_0(x_B) = - \frac{\gamma -1}{8} k(x_B) \Big( R_0^2(x_B) - S_0^2(x_B) \Big).
\end{aligned}
\end{equation*}
Then there exists a positive number $T' \leq T$ such that \eqref{RIE} admits a unique $C^1$ solution on $ I \times [0,T']$, where $T'$ depends only on the $C^1$ norms of $k$, $S_0$, $R_0$, $S_B$, and $R_B$.
\end{thm}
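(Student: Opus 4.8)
The plan is to reduce \eqref{RIE} to an equivalent system of integral equations along the two characteristic families and to solve that system by a Picard-type iteration, which is exactly the scheme underlying the local existence theory of Li and Yu in \cite{TLi3}. Since \eqref{RIE} is already written in diagonal (Riemann-invariant) form, no diagonalization is needed: the real content is to verify that the structural hypotheses of \cite{TLi3} hold for our coefficients and source, and to check that the prescribed data are consistent at the corner $(x_B,0)$.

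First I would record the three structural facts that are needed. (i) The speeds $\lambda_1,\lambda_2$ in \eqref{eigenRI} are smooth (indeed affine) functions of $(S,R)$, and the source $g$ in \eqref{g} is $C^1$ in $(x,S,R)$ because $k$ is $C^1$; hence all data entering the characteristic ODEs are locally Lipschitz. (ii) The system is strictly hyperbolic on a neighborhood of the data: from \eqref{eigenRI}, $\lambda_2-\lambda_1=\frac{\gamma-1}{2}(R-S)>0$ whenever $S<R$, i.e. whenever $\rho>0$, so the two characteristic fields are genuinely distinct in the near-vacuum (but non-vacuum) regime, and this persists on a short time interval by continuity. (iii) Both boundary data $S_B,R_B$ are prescribed at the single boundary $x=x_B$, which is the correct count precisely when both speeds are incoming there, $\lambda_1,\lambda_2>0$; this is the supersonic regime that is the standing assumption of the paper (recall $S>0$ forces $\lambda_1>0$), and again it persists for small $t$.

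With these in hand I would set up the iteration. For a point $(x,t)$ I trace the first characteristic $\Gamma^1$ backward; it meets either the initial line $t=0$ or the boundary $x=x_B$, and integrating $S_t+\lambda_1 S_x=g$ along $\Gamma^1$ expresses $S(x,t)$ as $S_0$ or $S_B$ at the foot plus $\int g\,d\tau$; the analogous construction along $\Gamma^2$ gives $R(x,t)$. Freezing $(S,R)$ in the speeds and in $g$ defines a map $\mathcal{T}$ on a closed ball in $C^1(I\times[0,T'])$, and the Lipschitz bounds from (i) show that $\mathcal{T}$ is a contraction once $T'$ is small enough, with $T'$ depending only on the $C^1$ norms of $k,S_0,R_0,S_B,R_B$; its fixed point is the desired solution and the contraction yields uniqueness. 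To promote the fixed point to a genuine $C^1$ solution one differentiates the characteristic integral equations and runs the same argument for $(S_x,S_t,R_x,R_t)$; here the two displayed compatibility conditions enter decisively, since they are exactly the equations obtained by evaluating the two PDEs of \eqref{RIE} at the corner $(x_B,0)$ after substituting $S_t(x_B,0)=S_B'(0)$, $S_x(x_B,0)=S_0'(x_B)$ (and likewise for $R$) together with \eqref{eigenRI} and \eqref{g}, and they guarantee that the first derivatives match continuously across $(x_B,0)$.

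The hard part will be the corner and boundary regularity rather than the interior contraction: ensuring that the characteristics emanate as well-defined $C^1$ curves that do not cross on $[0,T']$, and that the solution is $C^1$ up to $x=x_B$ and as $t\downarrow 0$. This is controlled by keeping the iteration in a $C^1$ norm (so that $S_x,R_x$ stay bounded and the backward characteristics are uniquely traced) and by the compatibility conditions above; with these ingredients the hypotheses of \cite{TLi3} are met and the theorem follows directly.
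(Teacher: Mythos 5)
The paper offers no proof of this statement at all: it is quoted directly as an application of Li--Yu \cite{TLi3}, so the ``paper's proof'' is the citation itself. Your proposal is a correct reconstruction of exactly the argument underlying that citation --- characteristic integral equations solved by Picard iteration in $C^1$, with strict hyperbolicity ($\lambda_2-\lambda_1=\tfrac{\gamma-1}{2}(R-S)>0$), both characteristics incoming at $x=x_B$ in the supersonic regime, and the displayed conditions correctly identified as the corner compatibility obtained by evaluating the two PDEs of \eqref{RIE} at $(x_B,0)$ --- so it matches the intended route.
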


The following property addresses the maximum principle which is helpful to obtain the uniform $C^0$ norms of Riemann invariants $S$ and $R$.

\begin{lem}\label{2.1}
Suppose that $k(x) \geq 0$ on $I$, $0 < S_0(x) < R_0(x)$ on $I$, and $0 < S_B(t) < R_B(t)$ on $[0,T]$.
If \eqref{RIE} admits a unique $C^1$ solution on $I \times [0,T]$ and
$$M:=\max\left\{\sup_{x \in I} R_0(x), \sup_{t \in [0, T]} R_B(t)\right\}<\infty,$$
then
\begin{equation*}
\label{0SRc}
0 < S(x,t) < R(x,t) \leq M \quad\text{for all}\ (x,t)\in I \times [0,T].
\end{equation*}
\end{lem}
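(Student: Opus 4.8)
The plan is to combine the characteristic monotonicities already recorded after \eqref{RIE} with a continuity (connectedness) argument in time. Note first that the three desired inequalities partly decouple: once $R\le M$ is known it controls $S$ from above through $S<R$, so the genuinely separate facts to establish are $R\le M$, $S>0$, and $R>S$ (equivalently $\rho>0$ by \eqref{rhov}). Since the solution is $C^1$ on $I\times[0,T]$ and the strict inequalities $0<S<R\le M$ hold on the initial line $\{t=0\}$ and the boundary line $\{x=x_B\}$ by hypothesis, I would set
\[
t_0:=\sup\bigl\{\tau\in[0,T]:\ 0<S(x,t)<R(x,t)\ \text{for all }(x,t)\in I\times[0,\tau]\bigr\},
\]
so that $t_0>0$ by continuity. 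The goal is to prove $t_0=T$; assuming $t_0<T$, I will show the strict inequalities in fact persist at $t=t_0$ and hence slightly beyond, contradicting maximality.

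On $[0,t_0)$ we have $0<S<R$, hence by \eqref{eigenRI} and $1<\gamma<3$ the flow is supersonic, $\lambda_1,\lambda_2>0$. Consequently every characteristic, traced backward in time from a point $(x,t)$ with $t\le t_0$, moves to the left and must exit the strip through the initial line $t=0$ or the boundary line $x=x_B$, never through a right boundary. Because $g\ge0$ makes $S$ nondecreasing along $\Gamma^1$ and $R$ nonincreasing along $\Gamma^2$, I read off $S(x,t)\ge S(\text{origin})>0$ and $R(x,t)\le R(\text{origin})\le M$, where the origin value is supplied by $S_0,S_B>0$ or $R_0,R_B\le M$ respectively. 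This yields the two one-sided bounds $S>0$ and $R\le M$ (the latter also giving $S<M$), with the positivity of $S$ strict since it can only have grown from a strictly positive datum.

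The delicate point, and the \emph{main obstacle}, is to keep $R>S$, i.e.\ to prevent vacuum from forming in finite time: $S$ increases along $\Gamma^1$ while $R$ decreases along $\Gamma^2$, so both monotonicities conspire to shrink the gap $R-S$. The saving feature is that the source $g=\frac{\gamma-1}{8}k(x)(R-S)(R+S)$ in \eqref{g} is itself proportional to $R-S$ and therefore vanishes at vacuum, so the gap can decay at most exponentially. To make this rigorous I would return to the conservative form, writing the mass equation in \eqref{a.8} as $\rho_t+v\rho_x=-\rho\,(v_x+kv)$ and integrating along the particle path $X'(t)=v(X(t),t)$, which gives $\rho(X(t),t)=\rho(X(0),0)\exp\!\bigl(-\int_0^t (v_x+kv)\,ds\bigr)$. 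Since $v=(S+R)/2>0$ in the supersonic region, the backward particle path again reaches $\{t=0\}$ or $\{x=x_B\}$, where $\rho$ is strictly positive; the exponential factor is a finite positive number on the bounded interval $[0,t]$ (the integrand being continuous for a $C^1$ solution), so $\rho>0$, i.e.\ $R>S$, throughout $I\times[0,t_0]$.

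Finally, I close the bootstrap. Because the origin values feeding the estimates are fixed and strictly positive for $S$, at most $M$ for $R$, and $\rho$ stays strictly positive through the exponential representation, the inequalities $S>0$ and $R>S$ remain strict at $t=t_0$ (with $R\le M$ persisting in the limit). Continuity of the $C^1$ solution then propagates these strict inequalities to some interval $[0,t_0+\varepsilon]$, contradicting the maximality of $t_0$. Hence $t_0=T$, and $0<S<R\le M$ on all of $I\times[0,T]$.
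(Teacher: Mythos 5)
Your two main tools are exactly the paper's: the bounds $0<S$ and $R\le M$ come from the monotonicity of $S$ along $\Gamma^1$ and of $R$ along $\Gamma^2$, and the absence of vacuum ($R>S$, i.e.\ $\rho>0$) comes from integrating the mass equation along particle paths $dx/dt=v$. Your representation $\rho(X(t),t)=\rho(X(0),0)\exp\bigl(-\int_0^t(v_x+kv)\,ds\bigr)$ is the paper's formula \eqref{rhonu} in disguise, since $\exp\bigl(-\int kv\,ds\bigr)=a(X(0))/a(X(t))$ along a particle path. The one structural difference is your continuation argument with $t_0$. The paper simply asserts that every backward particle path exits the domain through $t=0$ or $x=x_B$; that assertion tacitly uses positivity of the speeds, which rests on $S>0$ --- part of the very conclusion being proved. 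Your bootstrap closes this circle explicitly, and on this point your write-up is more careful than the paper's own proof.

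There is, however, one step that fails as written when $I=[x_B,\infty)$, a case the lemma explicitly covers (which is why the hypothesis $M<\infty$ is not automatic). Your final move --- that the strict inequalities $0<S<R$ on $I\times\{t_0\}$ persist on $I\times[t_0,t_0+\varepsilon]$ ``by continuity'' --- is valid only when $I$ is compact; on an unbounded interval the infima of $S$ and of $R-S$ along the line $t=t_0$ may be zero, so no single $\varepsilon>0$ works for all $x$, and the maximality of $t_0$ is not contradicted. The repair is cheap and uses a fact you already established: as long as the speeds are positive, backward characteristics and particle paths move to the left, so for any finite $x_C'$ the truncated region $[x_B,x_C']\times[0,T]$ contains its own domain of dependence. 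Run your entire bootstrap on $[x_B,x_C']\times[0,T]$, where compactness makes the persistence step legitimate, and then let $x_C'\to\infty$ to cover all of $I\times[0,T]$. With that modification your proof is complete.
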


\begin{proof}
Since $S(x,t)$ is increasing along the first characteristic curves and $R(x,t)$ is decreasing along the second characteristic curves, it is sufficient to prove that $S(x,t) < R(x,t)$ on $I \times [0,T]$ or, equivalently, $\rho(x,t) > 0$ on $I \times [0,T]$. By \eqref{RI} and our assumptions, it is obvious that $\rho(x,0)> 0$ for all $x \in I$ and  $\rho(x_B,t)> 0$ for all $t \in [0, T]$.

If problem \eqref{RIE} admits a $C^1$ solution on $I \times [0,T]$, then $v_{x}$ is continuous on $I \times [0,T]$ based on the fact that $v = (S+R)/2$ given in \eqref{rhov}. We observe that the first equation in problem \eqref{a.8} can be written as
\begin{equation}
\label{arho}
(a\rho)_{t} + v (a\rho)_{x} + ( a\rho ) v_{x} = 0.
\end{equation}
Along the characteristic curves $\Gamma:dx/dt = v(x,t)$,
equation \eqref{arho} becomes
\begin{equation}
\label{muarho}
\frac{d}{dt} \Big( \mu(t) a(x(t)) \rho (x(t),t) \Big) = 0,
\end{equation}
where
\begin{equation*}
\mu(t) = \exp \Big( \int_{t_0}^{t} v_{x} (x(s),s) \ ds \Big)>0
\end{equation*}
if $\Gamma$ emanates from $(x_0,t_0)$. Solving equation \eqref{muarho} shows
\begin{equation}
\label{rhonu}
\rho(x(t),t) = \frac{\mu(t_0)}{\mu(t)} \frac{a(x_0)}{a(x(t))} \rho (x_0,t_0).
\end{equation}

For any given $(x^{*},t^{*}) \in I \times [0,T]$, there exists a unique characteristic curve $\Gamma$ that emanates from some point $(x_0^{*},t_0^{*})$ and passes through $(x^{*},t^{*})$. All the possibilities of $(x_0^{*},t_0^{*})$ are divided into two cases: $t_0^{*} = 0$ or $x_0^{*} = x_B$. If $t_0^{*} = 0$, then it follows from $\rho (x,0) > 0$ and $a(x) >0$ for all $x \in I$ that \eqref{rhonu} says $\rho(x^{*},t^{*}) >0$. If $x_0^{*} = x_B$, applying similar arguments to $x_0^{*} = x_B$ yields that $\rho(x^{*},t^{*})$ never vanishes. This completes the proof.
\end{proof}

Under the assumptions of Lemma \ref{2.1}, it follows easily from \eqref{eigenRI} that the wave speed $\lambda_{1} >0$ and hence the gas flow is supersonic.

With the assistance of Theorem \ref{TTLi}, in order to obtain the unique global classical solution for \eqref{RIE}, we need only to get the following uniform a priori estimates. That is, for any fixed $T>0$, if \eqref{RIE} admits a unique $C^1$ solution on $I \times [0,T]$, then the $C^0$ norms of $S$, $R$, $S_{x}$, and $R_{x}$ have upper bounds independent of $T$. Due to Lemma \ref{2.1}, it remains to show that the $C^0$ norms of $S_x$ and $R_x$ have upper bounds independent of $T$. To this end, we will derive two Riccati equations from \eqref{RIE} in the next subsection and then study the solutions of the Riccati equations in Section 3.

\subsection{Riccati equations.}

According to Lemma \ref{2.1}, we see that, if \eqref{RIE} admits a unique $C^1$ solution on $I \times [0,T]$, then $R-S > 0$ on $I \times [0,T]$. Thus, $1/(R-S)$ and $\ln (R-S)$ are well-defined, which will be used in the following discussion. We now derive the Riccati equations from \eqref{RIE} by the generalized Lax transformations:
\begin{equation}
\label{YZ}
Y = e^h S_{x} + Q_1   \ \ \text{and} \ \ \
Z = e^h R_{x} + Q_2,
\end{equation}
where $h = h(S,R)$, $Q_1 = Q_1 (S,R)$, and $Q_2 = Q_2(S,R)$. Then $Y$ and $Z$ satisfy
\begin{equation*}
\label{Y}
\begin{aligned}
Y_t+\lambda_1 Y_x= \mathcal{A}Y^2 + \mathcal{B}Y + \mathcal{C} + \mathcal{D}Z + \mathcal{E}YZ  \ \ \text{and} \ \
Z_t+\lambda_2 Z_x = \widehat{\mathcal{A}}Z^2 +\widehat{ \mathcal{B}}Z + \widehat{\mathcal{C}} + \widehat{\mathcal{D}}Y + \widehat{\mathcal{E}}YZ,
\end{aligned}
\end{equation*}
respectively, where
\begin{equation*}
\label{A01}
\begin{aligned}
& \mathcal{D} = \frac{\gamma -1}{4} k R + \frac{3 - \gamma}{4} e^{-h} Q_1 + \frac{\gamma -1}{2} e^{-h} (R-S) Q_1\frac{\partial h}{\partial R} - \frac{\gamma -1}{2} e^{-h} (R-S) \frac{\partial Q_1}{\partial R},
\\ & \mathcal{E} = - \frac{3 - \gamma}{4} e^{-h}  - \frac{\gamma - 1}{2} e^{-h} (R-S) \frac{\partial h}{\partial R},
\\ & \widehat{\mathcal{D}} = \frac{\gamma -1}{4} k S + \frac{3 - \gamma}{4} e^{-h} Q_2 - \frac{\gamma -1}{2} e^{-h} (R-S) Q_2\frac{\partial h}{\partial S} + \frac{\gamma -1}{2} e^{-h} (R-S) \frac{\partial Q_2}{\partial S},
\\ & \widehat{\mathcal{E}} = - \frac{3 - \gamma}{4} e^{-h}  + \frac{\gamma - 1}{2} e^{-h} (R-S) \frac{\partial h}{\partial S},
\end{aligned}
\end{equation*}
and the other coefficients will be presented after solving $h$, $Q_1$, and $Q_2$ by letting
$\mathcal{D}= \mathcal{E}=\widehat{\mathcal{D}}=\widehat{\mathcal{E}}=0$.

Let $\mathcal{E} = \widehat{\mathcal{E}} = 0$. Then we get that
\begin{equation}
\label{h}
h = b \ln  \left(R-S \right),\qquad b := - \frac{3 - \gamma}{2( \gamma -1)}.
\end{equation}
Since $1 < \gamma < 3$, we have $b<0$. Using \eqref{h} and setting $\mathcal{D} = \widehat{\mathcal{D}} = 0$ yield that, if $b \neq -1$, then
\begin{equation}
\label{bn-1}
\begin{aligned}
 & Q_1 = \frac{k}{2b} S (R-S)^{b} +  \frac{k}{2(b+1)} (R-S)^{b+1} + G_1(S),
\\ & Q_{2} = \frac{k}{2b} R (R-S)^{b} - \frac{k}{2(b+1)} (R-S)^{b+1} + G_2(R),
\end{aligned}
\end{equation}
for some functions $G_1(S)$ and $G_2(R)$; if $b = -1$, then
\begin{equation*}
\begin{aligned}
 & Q_1 = -\frac{k}{2} S (R-S)^{-1} +  \frac{k}{2} \ln (R-S) + H_1(S),
\\ & Q_{2} = -\frac{k}{2} R (R-S)^{-1} - \frac{k}{2} \ln (R-S) + H_2 (R),
\end{aligned}
\end{equation*}
for some functions $H_1(S)$ and $H_2(R)$. In the near-vacuum case, $G_1$, $G_2$, $H_1$, and $H_2$ are small compared to the other terms of $Q_1$ and $Q_2$ and hence they can be ignored.

We let $b \neq -1$ and define
\begin{equation}
\label{C0b}
\aligned
\mathcal{C}_1(S,R,k,k',b) &:= \frac{ k^2}{8 b^2(b+1)^2(1-2b)}\Big(b(1-b)^2R^2+2b(b^2+3b-2)RS+(b^3+2b^2+3b-2)S^2\Big)\\
&\quad +\frac{k'}{4 b(b+1)(1-2b)}\Big( b(1-b)R^2-2b^2RS+(2-3b-b^2)S^2\Big),\\
\widehat{\mathcal{C}}_1(S,R,k,k',b) &:= \frac{ k^2}{8 b^2(b+1)^2(1-2b)}\Big(b(1-b)^2S^2+2b(b^2+3b-2)RS+(b^3+2b^2+3b-2)R^2\Big)\\
&\quad +  \frac{k'}{4 b(b+1)(1-2b)}\Big( b(1-b)S^2-2b^2RS+(2-3b-b^2)R^2 \Big),\\
\mathcal{C}_1(S,R,k,k',-1) &:= \frac{ k^2}{24} \Big(-2R (R-S) \ln(R-S) + 8S (R-S) \ln(R-S) -4 (R-S)^2 (\ln (R-S) )^2\\
&\quad  -3R^2 -3S^2 \Big) +\frac{k'}{24} \Big( 2(R^2 - S^2) - (4R+8S)(S - (R-S) \ln(R-S) ) \Big),\\
\widehat{\mathcal{C}}_1(S,R,k,k',-1) &:= \frac{ k^2}{24} \Big( 2S (R-S) \ln(R-S) - 8R (R-S) \ln(R-S) - 4 (R-S)^2 (\ln (R-S) )^2\\
&\quad -3S^2 -3R^2 \Big) +\frac{k'}{24} \Big( 2(S^2 - R^2) - (4S+8R)(R + (R-S) \ln(R-S) ) \Big).
\endaligned
\end{equation}
Then along the first characteristic curves $\Gamma^1$, $Y$ satisfies the following Riccati equation:
\begin{equation}
\label{Y07}
\frac{d}{dt}Y = \mathcal{A}Y^2 + \mathcal{B}Y + \mathcal{C},
\end{equation}
where when $b\ne -1$,
\begin{equation}
\label{A02}
\begin{aligned}
& \mathcal{A} = - \frac{1-b}{1-2b} (R-S)^{-b} ,
\\ & \mathcal{B} = - \frac{k}{2b(b+1)(1-2b)} \Big( b (b^2 +3b -2) R + (b^3 + 2b^2 +3b -2 ) S\Big),
\\ & \mathcal{C} = (R-S)^{b} \mathcal{C}_1(S,R,k,k',b),
\end{aligned}
\end{equation}
and when $b=-1$,
\begin{equation*}
\begin{aligned}
 \mathcal{A} &= - \frac{2}{3} (R-S),
\\ \mathcal{B} &= \frac{k}{6} \Big( R  - 4S + 4(R-S) \ln (R-S) \Big),
\\ \mathcal{C} &= (R-S)^{-1} \mathcal{C}_1(S,R,k,k',-1).
\end{aligned}
\end{equation*}
Similarly, along the second characteristic curves $\Gamma^2$, $Z$ satisfies the following Riccati equation:
\begin{equation}
\label{Z01}
\frac{d}{dt}Z =\widehat{\mathcal{A}}Z^2 + \widehat{\mathcal{B}}Z + \widehat{\mathcal{C}},
\end{equation}
where when $b\ne -1$,
\begin{equation}
\label{A03}
\begin{aligned}
& \widehat{\mathcal{A}} = - \frac{1-b}{1-2b}(R-S)^{-b},
\\ & \widehat{\mathcal{B}} = - \frac{k}{2b(b+1)(1-2b)} \Big( b (b^2 +3b -2) S + (b^3 + 2b^2 +3b -2 ) R \Big),
\\ & \widehat{\mathcal{C}} =(R-S)^{b} \widehat{\mathcal{C}}_1(S,R,k,k',b),
\end{aligned}
\end{equation}
and when $b=-1$,
\begin{equation*}
\begin{aligned}
\widehat{\mathcal{A}} &= - \frac{2}{3} (R-S),
\\ \widehat{\mathcal{B}} &= \frac{k}{6} \Big( S  - 4R - 4(R-S) \ln (R-S) \Big),
\\ \widehat{\mathcal{C}} &= (R-S)^{-1} \widehat{\mathcal{C}}_1(S,R,k,k',-1).
\end{aligned}
\end{equation*}

We observe that $\mathcal{A} <0$. For $\mathcal{C} \geq 0$, the quadratic form on the right-hand side of \eqref{Y07} can be written into the form $\mathcal{A}(Y-Y_1(t))(Y-Y_2(t))$, where $Y_1(t)\le 0\le Y_2(t)$. A similar argument also works for $Z$. This motivates us to consider the following definition.

\begin{defn}\label{def1}
Let $W=W_{1}(t)$ and $W=W_{2}(t)$, $t\in [T_1,T_2]$, be two parametrized curves in the $t$-$W$ plane. If there exists a horizontal line $W = W_{*}$ such that $W_{1}(t) \leq W_{*} \leq W_{2} (t)$ for all $t \in [T_1, T_2]$, we say that $W = W_{*}$ is a horizontal separating line between $W=W_1(t)$ and $W=W_2(t)$.
\end{defn}

The next lemma locates the solution of the Riccati equation if $W = 0$ is a horizontal separating line between $W=W_1(t)$ and $W=W_2(t)$.

\begin{lem}\label{key1}
Consider the Riccati equation
\begin{equation}
\label{ricW}
\frac{dW(t)}{dt} = \mathcal{A}(t)(W(t) - W_1(t))(W(t) - W_2(t)), \ \ t \in [T_1, T_2],
\end{equation}
where $\mathcal{A}(t) < 0$ and $W_1(t) \leq  0 \leq W_2(t)$ for all $ t \in [T_1, T_2]$.  If $W(T_1) \geq 0$, then
\begin{equation*}
\label{W}
0 \leq W(t) \leq W(T_1) - \frac{1}{4} \int_{T_1}^{t} \mathcal{A}(s) \left(W_2(s) - W_1(s)\right)^2 \ ds, \ \ t \in [T_1,T_2].
\end{equation*}
\end{lem}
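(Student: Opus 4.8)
My plan is to establish the two bounds separately. The lower bound $W(t)\ge 0$ will follow from a barrier/comparison argument exploiting the sign of the right-hand side of \eqref{ricW} on the line $W=0$, while the upper bound will follow from an elementary completing-the-square estimate of the quadratic $(W-W_1)(W-W_2)$ together with a single integration.

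For the lower bound, I would first observe that when $W=0$ the right-hand side of \eqref{ricW} equals $\mathcal{A}(t)W_1(t)W_2(t)$, which is nonnegative: indeed $\mathcal{A}(t)<0$, while $W_1(t)\le 0\le W_2(t)$ forces $W_1(t)W_2(t)\le 0$. Hence the constant function $0$ is a subsolution of \eqref{ricW}. Since the right-hand side is a polynomial in $W$ with continuous coefficients, it is locally Lipschitz in $W$, so the standard comparison theorem for scalar ODEs applies: from $W(T_1)\ge 0$ we conclude $W(t)\ge 0$ on $[T_1,T_2]$. Equivalently, if $W$ ever dipped below $0$ there would be a last time $t_1$ with $W(t_1)=0$ and $\dot W(t_1)<0$, contradicting $\dot W(t_1)=\mathcal{A}(t_1)W_1(t_1)W_2(t_1)\ge 0$.

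For the upper bound, the key point is the pointwise inequality
\[
(W-W_1)(W-W_2)\ \ge\ -\tfrac14\,(W_2-W_1)^2,
\]
valid for every real $W$, which is merely the completion of the square $(W-W_1)(W-W_2)=\bigl(W-\tfrac{W_1+W_2}{2}\bigr)^2-\tfrac14(W_2-W_1)^2$. Multiplying by $\mathcal{A}(t)<0$ reverses the inequality and yields, through \eqref{ricW},
\[
\frac{dW}{dt}=\mathcal{A}(t)(W-W_1)(W-W_2)\ \le\ -\tfrac14\,\mathcal{A}(t)\,(W_2(t)-W_1(t))^2 .
\]
Integrating from $T_1$ to $t$ then produces exactly the claimed estimate $W(t)\le W(T_1)-\tfrac14\int_{T_1}^{t}\mathcal{A}(s)(W_2(s)-W_1(s))^2\,ds$.

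I expect no serious obstacle, as the lemma is short. The single point requiring care is the direction of the inequalities when multiplying through by the negative coefficient $\mathcal{A}(t)$. The genuinely useful observation — which is what makes the bound valuable downstream — is that the completing-the-square estimate holds for \emph{all} $W$ and does not require the (possibly unbounded) curve $W_2$ to be bounded: only the integral $\int_{T_1}^{t}\mathcal{A}(s)(W_2-W_1)^2\,ds$ enters the final bound. This is precisely the feature highlighted in the discussion preceding the statement, where the upper barrier $Y_2(t)$ need not itself be bounded.
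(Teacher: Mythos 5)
Your proof is correct and follows essentially the same route as the paper: the lower bound via a comparison/differential-inequality argument with the zero function as a subsolution (justified by $\mathcal{A}W_1W_2\ge 0$), and the upper bound by bounding the quadratic at its vertex $W=(W_1+W_2)/2$ and integrating. The paper's proof is the same argument, phrased via "the theory of differential inequalities" for the first part and the maximum of the quadratic form for the second.
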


\begin{proof}
Let $ X(t)$ be a differentiable function satisfying
\begin{equation}
\label{ricX}
\frac{dX(t)}{dt} \leq \mathcal{A}(t)(X(t) - W_1(t))(X(t) - W_2(t)), \ \ t \in [T_1, T_2].
\end{equation}
Since $W(T_1) \geq 0$ and $X(t)\equiv 0$ is a solution of \eqref{ricX}, applying the theory of differential inequalities yields that
\begin{equation*}
0=X(t) \leq W(t), \ \ t \in  [T_1,T_2].
\end{equation*}

On the other hand, we observe that the quadratic form on the right hand side of \eqref{ricW} attains its maximum when $W\equiv (W_1+W_2)/2$.
As a result, we obtain that
\begin{equation*}
	\frac{dW(t)}{dt} \leq  - \frac{1}{4} \mathcal{A}(t)  (W_2(t) - W_1(t))^2, \ \ t \in [T_1,T_2].
\end{equation*}
Integrating both sides of the above inequality finishes the proof.
\end{proof}

\section{Existence and uniqueness of the global classical solution.}
\setcounter{section}{3}

In this section, we investigate the global in time existence and uniqueness of the classical solution for \eqref{RIE}.
We employ Lemma \ref{key1} to get the uniform a priori estimates for $R_x$ and $S_x$ under some suitable initial and boundary conditions.
Throughout this section, we suppose that $k(x)$, $S_0$, $R_0$, $S_B$, and $R_B$ satisfy the following conditions:
\begin{itemize}
    \item[(A1)] $k(x) (\geq 0)  \in C^1(I)$ has a bounded $C^1$ norm.
    \item[(A2)] $S_0(x_B)= S_B(0)$, $R_0(x_B) = R_B(0)$, and
    \begin{equation*}
    \begin{aligned}
& S'_{B}(0) + \Big( \frac{\gamma +1}{4} S_0 (x_B) + \frac{3-\gamma}{4} R_0 (x_B) \Big) S'_0 (x_B) = \frac{\gamma -1}{8} k(x_B) \Big( R_0^2(x_B) - S_0^2(x_B) \Big),
\\ & R'_{B}(0) + \Big( \frac{3-\gamma}{4} S_0 (x_B) + \frac{\gamma +1}{4} R_0 (x_B) \Big) R'_0(x_B) = - \frac{\gamma -1}{8} k(x_B) \Big( R_0^2(x_B) - S_0^2(x_B) \Big).
\end{aligned}
    \end{equation*}
	\item[(A3)] $S_0(x), R_0(x) \in C^1(I)$ have bounded $C^1$ norms, and $0 < S_0(x) < R_0(x)$ for all $x \in I$.
    \item[(A4)] $S_B(t), R_B(t) \in C^1([0, \infty))$ have bounded $C^1$ norms, and $0 < S_B(t) < R_B(t)$ for all $t \in [0, \infty)$.
\end{itemize}

\subsection{A divergent duct with a bounded cross-sectional area.}

Although the coefficients in \eqref{Y07} and \eqref{Z01} are extremely complicated, there is a conceivable way out of this quandary. We observe that they are formally symmetric to some extent. For $b \neq -1$, we write $\mathcal{B} = \mathcal{B}(S,R)$ in \eqref{A02} and $\widehat{\mathcal{B}} = \widehat{\mathcal{B}}(S,R)$ in \eqref{A03}. Then it is clear that $ \mathcal{B}(S,R) = \widehat{\mathcal{B}}(R,S)$ and $\mathcal{C}_1(S,R,k,k',b) =\widehat{ \mathcal{C}}_1(R,S,k,k',b)$ defined in \eqref{C0b}. Analogous arguments can be dealt with for $b = -1$. Moreover, since the coefficients in \eqref{Y07} and \eqref{Z01} are much simpler when $R \approx S$, i.e., $\rho_{\eta} \approx 0$, it inspires us to study the near-vacuum solutions.
Let
\begin{equation}
\label{xi0}
\xi(x,t) :=\frac{R(x,t)}{S(x,t)} - 1 = 4 \sqrt{ \frac{\nu}{\gamma -1} } \ \frac{ \rho^{\frac{\gamma-1}{2}} (x,t)}{S(x,t)}.
\end{equation}
Under conditions ${\rm (A1)}$,  ${\rm (A3)}$ and ${\rm (A4)}$, Lemma \ref{2.1} points out that if \eqref{RIE} admits a unique $C^1$ solution on $I \times [0,\infty)$, then $\xi(x,t) >0$ on $I \times [0,\infty)$. Actually, $\xi(x,t)$ is dominated by $\sqrt{\nu}$ if we further suppose that
\begin{itemize}
	\item[(A5)] $S_0'(x) \geq 0$, $R'_0(x) \geq 0$, $\xi_0(x) = O(\sqrt{\nu})$ for all $x \in I$, and $S_B'(t) \leq 0$, $R_B'(t) \leq 0$, $\xi_B(t) = O(\sqrt{\nu})$ for all $t \in [0,\infty)$, where
\begin{equation*}
\label{xi0B}
\xi_0(x):=\xi(x,0) \ \text{for all} \ x \in I \ \ \text{and} \ \ \xi_B(t):=\xi(x_B,t)\ \text{for all} \ t \in [0,\infty).
\end{equation*}
\end{itemize}

\begin{lem}\label{estimate}
Suppose that conditions ${\rm (A1)}$--${\rm (A5)}$ hold. If \eqref{RIE} admits a unique $C^1$ solution on $I \times [0,\infty)$, then
\begin{equation}
\label{nu2}
0 < \xi(x,t) = O(\sqrt{\nu})
\end{equation}
for all $(x,t) \in I \times [0,\infty)$.
\end{lem}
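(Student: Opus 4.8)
Since by Lemma \ref{2.1} conditions (A1), (A3), (A4) already force $0<S(x,t)<R(x,t)$ on $I\times[0,\infty)$, the lower bound $\xi>0$ is immediate from $\xi=(R-S)/S$ (see \eqref{xi0}), and the whole point is the upper bound $\xi=O(\sqrt\nu)$. My plan is purely characteristic and uses no information on the derivatives of the solution: fix $(x,t)$ and follow \emph{both} backward characteristics through it, the slow one $\Gamma^1$ ($dx/dt=\lambda_1$) and the fast one $\Gamma^2$ ($dx/dt=\lambda_2$). Recall from the discussion after \eqref{Gamma2} that $g\ge0$ makes $S$ nondecreasing along $\Gamma^1$ and $R$ nonincreasing along $\Gamma^2$; hence, writing $(x_1,t_1)$ and $(x_2,t_2)$ for the feet of $\Gamma^1$ and $\Gamma^2$ on the parabolic boundary $\{t=0\}\cup\{x=x_B\}$,
\[
1+\xi(x,t)=\frac{R(x,t)}{S(x,t)}\le\frac{R(x_2,t_2)}{S(x_1,t_1)} .
\]
So everything reduces to controlling this ratio of boundary/initial values.

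The second ingredient is the geometry of the two feet. By Lemma \ref{2.1} the flow is supersonic and $\lambda_2-\lambda_1=\tfrac{\gamma-1}{2}(R-S)>0$ (from \eqref{eigenRI}), so $\Gamma^2$ travels strictly faster than $\Gamma^1$; emanating backward from the common point they cannot re-cross, and $\Gamma^2$ stays to the left of $\Gamma^1$. Consequently its foot is always the more ``upstream'' one: on $\{t=0\}$ we get $x_2\le x_1$, and on $\{x=x_B\}$ we get $t_2\ge t_1$. This is exactly matched to the monotonicity hypotheses in (A5): since $S_0,R_0$ are nondecreasing in $x$ and $S_B,R_B$ are nonincreasing in $t$, the denominator can only be \emph{larger} at the foot of $\Gamma^1$ than at the corresponding point below the foot of $\Gamma^2$. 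Replacing $S(x_1,t_1)$ accordingly collapses the two-point ratio into the single-point quantity $1+\xi$ evaluated on the data, which is $1+O(\sqrt\nu)$ by (A5).

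Concretely I would split into the three admissible configurations of the feet. If both lie on $\{t=0\}$, then $x_2\le x_1$ gives $S_0(x_1)\ge S_0(x_2)$, whence $R(x_2,t_2)/S(x_1,t_1)\le R_0(x_2)/S_0(x_2)=1+\xi_0(x_2)$; if both lie on $\{x=x_B\}$, then $t_2\ge t_1$ gives $S_B(t_1)\ge S_B(t_2)$, whence the ratio is at most $R_B(t_2)/S_B(t_2)=1+\xi_B(t_2)$. The only genuinely coupled case is the mixed one, $\Gamma^1$ reaching $\{t=0\}$ at $x_1\ge x_B$ while $\Gamma^2$ reaches $\{x=x_B\}$ at $t_2\ge0$ (the opposite mixture is geometrically impossible, again because $\Gamma^2$ is the leftmost). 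There I would route both values through the corner $(x_B,0)$, using monotonicity and the compatibility relations $S_0(x_B)=S_B(0)$, $R_0(x_B)=R_B(0)$ of (A2): $R_B(t_2)\le R_B(0)=R_0(x_B)$ and $S_0(x_1)\ge S_0(x_B)$, so the ratio is bounded by $R_0(x_B)/S_0(x_B)=1+\xi_0(x_B)$. In every case $\xi(x,t)$ is dominated by a data value of $\xi$, which is $O(\sqrt\nu)$ by (A5), proving \eqref{nu2}. The main thing to get right is precisely this bookkeeping in the mixed case — verifying that the fast-characteristic foot is always the upstream one (non-crossing of characteristics, which relies on $\lambda_2>\lambda_1$ throughout, i.e.\ on $\rho>0$ from Lemma \ref{2.1}, and on the positive speeds to guarantee that the backward feet actually reach the parabolic boundary) and that the corner compatibility (A2) lets one compare values taken on the two different parts of the boundary.
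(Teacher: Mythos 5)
Your proposal is correct and follows essentially the same route as the paper: bound $1+\xi(x,t)=R(x,t)/S(x,t)$ by the data values at the feet of the two backward characteristics, using that $S$ increases along $\Gamma^1$, $R$ decreases along $\Gamma^2$, the ordering of the feet ($x_2\le x_1$, resp.\ $t_2\ge t_1$), and the monotonicity of the data in (A5). The paper writes out only the case where both feet lie on $\{t=0\}$ and dismisses the rest as ``similar arguments,'' so your explicit treatment of the mixed case via the corner $(x_B,0)$ and the compatibility in (A2) is simply a more detailed account of the same proof.
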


\begin{proof}
Let $(x,t)$ be any given point in $I \times [0,\infty)$. Then the characteristic curves $\Gamma^i$ ($i=1,2$) passing through $(x,t)$ emanate from some point of the form either $(x_i,0)$ or $(x_B,t_i)$.

If $\Gamma^1$ and $\Gamma^2$ emanate from $(x_1,0)$ and $(x_2,0)$, respectively, then it is clear that $x_2 < x_1$. Since $S$ (resp., $R$) is increasing (resp., decreasing) along $\Gamma^1$ (resp., $\Gamma^2$), by the definition of $\xi$ we have
\begin{equation*}
0< \xi(x,t) = \frac{R(x,t)}{S(x,t)} - 1 \leq  \frac{R(x_2, 0)}{S(x_1, 0)} - 1.
\end{equation*}
Since $S'_0(x) \geq 0$ and $R'_0(x) \geq 0$ for all $x \in I$ in ${\rm (A5)}$, it follows from $x_2 < x_1$ that
\begin{equation}
\label{xi1}
0< \xi(x,t) \leq  \min {( \xi_0(x_1), \xi_0(x_2) )}.
\end{equation}
Condition ${\rm (A5)}$ also gives
\begin{equation}
\label{xi2}
\min {(\xi_0(x_1), \xi_0(x_2))} =  O(\sqrt{\nu}).
\end{equation}
An easy consequence of \eqref{xi1} and \eqref{xi2} shows estimate \eqref{nu2}. If $\Gamma^1$ emanates from $(x_1,0)$ and $\Gamma^2$ from $(x_B, t_2)$, or $\Gamma^1$ emanates from $(x_B,t_1)$ and $\Gamma^2$ from $(x_B, t_2)$, then similar arguments lead to estimate \eqref{nu2}.
\end{proof}

By introducing the parameter $\xi$, the expressions for $\mathcal{C}$ and $\widehat{\mathcal{C}}$ have the same simplified form. In fact, we have
\begin{equation}
\label{A04}
\mathcal{C}, \ \widehat{\mathcal{C}} = S^{b+2} \xi^{b} \left( \Big(\frac{-1}{4b^2} +O(\xi) \Big) k^2 +  \Big(\frac{1}{2b} + O(\xi) \Big) k' \right)
\end{equation}
for all $b <0$. From \eqref{A04}, we see that $\mathcal{C}$ can be viewed as a function of $\xi$ and thus a function of $\nu$ via Lemma \ref{estimate}.
In the following, we seek an additional requirement for $k$ which guarantees that $\mathcal{C}$ and $\widehat{\mathcal{C}}$ are nonnegative.

\begin{lem}\label{key2}
Suppose that conditions ${\rm (A1)}$--${\rm (A5)}$ hold. If there is a number $\delta \in (0,2)$ such that $k(x)$ satisfies
\begin{equation}\
\label{k1}
k'(x) \leq \frac{k^2(x)}{(2 - \delta)b}
\end{equation}
for all $ x \in I$, then there is a parameter $ \nu_{*}>0$ such that $\mathcal{C} = \mathcal{C}(\nu) \geq 0$ and $\widehat{\mathcal{C}} = \widehat{\mathcal{C}}(\nu) \geq 0$ for $\nu \leq \nu_{*}$.
\end{lem}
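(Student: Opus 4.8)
The plan is to reduce both inequalities to the sign of a single scalar quantity and then exploit the structural constraint \eqref{k1}. By Lemma \ref{2.1} we have $S>0$ on $I\times[0,\infty)$, and by Lemma \ref{estimate} we have $\xi>0$ there; hence the prefactor $S^{b+2}\xi^{b}$ appearing in \eqref{A04} is strictly positive for every $b<0$, regardless of the sign of $b+2$. Consequently $\mathcal{C}\ge0$ and $\widehat{\mathcal{C}}\ge0$ are each equivalent to the nonnegativity of the bracket
\[
\mathcal{F}:=\Big(-\tfrac{1}{4b^2}+\xi f_1\Big)k^2+\Big(\tfrac{1}{2b}+\xi f_2\Big)k',
\]
where $\xi f_1$ and $\xi f_2$ denote the two $O(\xi)$ corrections in \eqref{A04}. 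Because \eqref{A04} is the common representation of both $\mathcal{C}$ and $\widehat{\mathcal{C}}$ (consistent with the symmetry $\mathcal{C}_1(S,R,k,k',b)=\widehat{\mathcal{C}}_1(R,S,k,k',b)$ noted earlier), a single lower bound for $\mathcal{F}$ settles both claims at once.

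First I would pin down the two elementary facts that drive the estimate. On one hand, $b<0$ and $2-\delta>0$ force the right-hand side of \eqref{k1} to be nonpositive, so $k'\le k^2/((2-\delta)b)\le0$ everywhere on $I$. On the other hand, Lemma \ref{estimate} gives $\xi=O(\sqrt\nu)$, while the maximum principle together with ${\rm (A1)}$--${\rm (A4)}$ bounds $S$, $R$, $k$, and $k'$ uniformly on $I\times[0,\infty)$; hence $f_1$ and $f_2$ are uniformly bounded for $\nu$ small, and the coefficient $\tfrac{1}{2b}+\xi f_2$ of $k'$ remains strictly negative once $\nu$ lies below some first threshold, since $\tfrac{1}{2b}<0$.

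The decisive step is to use \eqref{k1} to eliminate $k'$. Since its coefficient in $\mathcal{F}$ is negative, multiplying \eqref{k1} by that coefficient reverses the inequality and yields
\[
\mathcal{F}\ \ge\ k^2\Big[\tfrac{\delta}{4(2-\delta)b^2}+\xi\Big(f_1+\tfrac{f_2}{(2-\delta)b}\Big)\Big],
\]
where the constant $\tfrac{\delta}{4(2-\delta)b^2}$ is strictly positive precisely because $\delta\in(0,2)$ (this records the elementary identity $-\tfrac14+\tfrac{1}{2(2-\delta)}=\tfrac{\delta}{4(2-\delta)}$). As the $\xi$-correction inside the bracket is uniformly bounded, I would choose $\nu_*>0$ so small that its absolute value never exceeds $\tfrac{\delta}{8(2-\delta)b^2}$; then $\mathcal{F}\ge\tfrac{\delta}{8(2-\delta)b^2}k^2\ge0$ for all $\nu\le\nu_*$, giving $\mathcal{C},\widehat{\mathcal{C}}\ge0$ simultaneously. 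The main obstacle I anticipate is the uniformity over the possibly unbounded strip $I\times[0,\infty)$: one must secure a uniform positive lower bound for $S$ (and an upper bound for $R$) so that the $O(\xi)$ coefficients $f_1,f_2$ are genuinely controlled by a single constant and one threshold $\nu_*$ works at every point, and one must confirm that the coefficient of $k'$ keeps a constant sign throughout.
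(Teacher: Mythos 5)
Your proposal is correct and follows essentially the same route as the paper: both reduce $\mathcal{C},\widehat{\mathcal{C}}\ge 0$ to nonnegativity of the common bracket in \eqref{A04} (using positivity of the prefactor $S^{b+2}\xi^{b}$) and then invoke \eqref{k1} together with $\xi=O(\sqrt{\nu})$ from Lemma \ref{estimate} to choose $\nu_*$. The paper's proof simply asserts this last implication, whereas you spell out the mechanism — multiplying \eqref{k1} by the negative coefficient of $k'$ and using $-\tfrac14+\tfrac{1}{2(2-\delta)}=\tfrac{\delta}{4(2-\delta)}$ to produce the positive margin $\tfrac{\delta}{4(2-\delta)b^2}k^2$ that absorbs the $O(\xi)$ corrections — which is exactly the role the gap $\delta\in(0,2)$ plays.
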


\begin{proof}
By \eqref{A04}, $\mathcal{C}  = \mathcal{C}(\nu) \geq 0$ is equivalent to
\begin{equation}
\label{A05}
\Big(\frac{-1}{4b^2} + O(\xi) \Big) k^2 +  \Big(\frac{1}{2b} + O(\xi) \Big) k'   \geq 0.
\end{equation}
It follows from \eqref{k1} that there is a parameter $\xi_{*}>0$, i.e., $\nu_{*}>0$ by Lemma \ref{estimate}, such that \eqref{A05} holds.
The same arguments as above show $\widehat{\mathcal{C}} = \widehat{\mathcal{C}}(\nu) \geq 0$ for $\nu \leq \nu_{*}$. Hence the proof is complete.
\end{proof}

Suppose that \eqref{RIE} admits a unique $C^1$ solution on $I \times [0,T]$.
Lemma \ref{key2} states that if $k$ satisfies \eqref{k1}, then as $\nu$ is sufficiently small, the quadratic form on the right-hand side of \eqref{Y07} can be split into the form $\mathcal{A}(Y-Y_1(t))(Y-Y_2(t))$ with $Y_1(t)\le Y_2(t)$ for $t\in [0,T]$ and $Y=0$ forms a horizontal separating line between $Y=Y_1(t)$ and $Y=Y_2(t)$.
If $\Gamma^1$ emanates from the point $(x(0),0)$ and $Y(0) \geq 0$, then Lemma \ref{key1} shows that $Y(t) \geq 0$ for all $t\in [0,T]$.
On the other hand, if $\Gamma^1$ emanates from the point $(x_B,t_B)$ and $Y(t_B) \geq 0$, Lemma \ref{key1} also gives that $Y(t) \geq 0$ for all $t\in [t_B,T]$. As for $Z$, we can apply the same arguments to $Z(0) \geq 0$ or $Z(t_B) \geq 0$. As a consequence, the conditions $Y(0)\ge 0$ and $Z(0)\ge 0$ provide suitable initial conditions for \eqref{RIE} while the conditions $Y(t_B)\ge 0$ and $Z(t_B) \geq 0$ provide suitable boundary conditions for \eqref{RIE}. Define
\begin{equation*}
\epsilon_0(x):=\ln \big( S_0(x)\xi_0(x) \big) \  \text{for all} \ x \in I \ \ \text{and} \ \
\epsilon_B(t):=\ln \big( S_B(t)\xi_B(t) \big) \ \text{for all} \ t \in [0, \infty).
\end{equation*}
Then the initial and boundary conditions are given in the following two lemmas.

\begin{lem}\label{key3}
Suppose that conditions ${\rm (A1)}$--${\rm (A5)}$ hold. Then $Y(0) \geq 0$ if, and only if,
\begin{equation}
\label{S0}
\begin{cases}
\frac{k(x)S_0(x)}{2} \left( \frac{1}{-b} - \frac{1}{b+1}\xi_0(x) \right) \leq S'_{0}(x), & \text{if} \ b \neq -1, \\
\frac{k(x)S_0(x)}{2} \Big(  1 - \epsilon_0(x) \xi_0(x) \Big) \leq S'_{0}(x), & \text{if} \ b = -1,
\end{cases}
\end{equation}
for all $x \in I$. And $Z(0) \geq 0$ if, and only if,
\begin{equation}
\label{R0}
\begin{cases}
\frac{k(x)S_0(x)}{2} \left(  \frac{1}{-b} - \frac{1}{b(b+1)}\xi_0(x) \right) \leq R'_{0}(x), & \text{if} \ b \neq -1, \\
\frac{k(x)S_0(x)}{2} \Big(  1 + \xi_0(x) +\epsilon_0(x) \xi_0(x) \big)  \Big) \leq R'_{0}(x), & \text{if} \ b = -1,
\end{cases}
\end{equation}
for all $x \in I$. In particular, if
\begin{equation}
\label{k2}
\frac{k(x)S_0(x)}{-b} \leq S'_{0}(x)\quad \text{and}\quad\frac{k(x)R_0(x)}{-b} \leq R'_{0}(x)
\end{equation}
for all $x \in I$, then there is a parameter $\nu_{*}>0$ such that $Y(0) \geq 0$ and $Z(0) \geq 0$ for $\nu \leq \nu_{*}$.
\end{lem}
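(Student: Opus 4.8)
The plan is to prove all three assertions by substituting the generalized Lax transformation \eqref{YZ} evaluated at $t=0$, where $S=S_0(x)$, $R=R_0(x)$, $S_x=S_0'(x)$, and $R_x=R_0'(x)$, and then reducing each resulting inequality to the stated form via the variable $\xi_0$ from \eqref{xi0}. Throughout, conditions ${\rm (A3)}$ guarantee $R_0>S_0>0$, so the factor $(R_0-S_0)^b=e^h$ (with $h$ as in \eqref{h}) is strictly positive and may be divided out, and the negligible terms $G_1,G_2,H_1,H_2$ are dropped as indicated in the near-vacuum discussion following \eqref{bn-1}.

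First I would treat the case $b\neq-1$. Substituting $e^h=(R_0-S_0)^b$ and the expression \eqref{bn-1} for $Q_1$ into $Y=e^hS_x+Q_1$, the positivity of $(R_0-S_0)^b$ shows that $Y(0)\geq 0$ is equivalent to
\[
S_0'(x)+\frac{k}{2b}S_0+\frac{k}{2(b+1)}(R_0-S_0)\geq 0.
\]
Rewriting $R_0-S_0=S_0\xi_0$ and rearranging yields exactly the first line of \eqref{S0}. The identical computation with $Q_2$ and $R_x=R_0'$, now using $R_0=S_0(1+\xi_0)$ together with the identity $-\tfrac1b+\tfrac1{b+1}=-\tfrac1{b(b+1)}$, produces the first line of \eqref{R0}. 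For the borderline exponent $b=-1$ I would repeat the argument with $e^h=(R_0-S_0)^{-1}$ and the logarithmic forms of $Q_1,Q_2$, multiplying through by the positive quantity $R_0-S_0$; using $\ln(R_0-S_0)=\ln(S_0\xi_0)=\epsilon_0$ collapses the inequalities into the second lines of \eqref{S0} and \eqref{R0}. These steps are purely algebraic and establish the two equivalences.

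For the ``In particular'' claim the decisive observation is that the $\xi_0$-independent part of the right-hand side of both \eqref{S0} and \eqref{R0} equals $\tfrac{kS_0}{-2b}$, which is precisely half of the lower bound $\tfrac{kS_0}{-b}$ imposed on $S_0'$ by \eqref{k2}; moreover $R_0'\geq\tfrac{kR_0}{-b}\geq\tfrac{kS_0}{-b}$ since $R_0\geq S_0$. Thus \eqref{k2} prescribes a value at least twice the leading term appearing in \eqref{S0} and \eqref{R0}, and the remaining contributions are $O(\xi_0)=O(\sqrt\nu)$ by Lemma \ref{estimate}. I would therefore argue that, once $\nu$ is small enough that these $O(\sqrt\nu)$ corrections are dominated by this factor-of-two gap, \eqref{k2} forces both \eqref{S0} and \eqref{R0}, giving $Y(0)\geq 0$ and $Z(0)\geq 0$ for $\nu\leq\nu_*$. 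The uniform choice of $\nu_*$ follows from the uniform bounds on $k$, $S_0$, $R_0$ supplied by ${\rm (A1)}$ and ${\rm (A3)}$.

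The main obstacle is the logarithmic case $b=-1$, where the correction is not a clean $O(\xi_0)$ but rather $\epsilon_0\xi_0=\xi_0\ln(S_0\xi_0)$, carrying a logarithmic blow-up as $\xi_0\to 0$. I would dispose of this by noting that ${\rm (A3)}$ makes $S_0$ bounded above and below on $I$, so $\xi_0\ln(S_0\xi_0)\to 0$ as $\xi_0\to 0$ (it vanishes, albeit more slowly than any power of $\xi_0$); this still tends to zero with $\nu$ and is again absorbed by the factor-of-two buffer. Everything else is routine substitution and rearrangement.
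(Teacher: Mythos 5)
Your proposal is correct and follows essentially the same route as the paper: substitute the Lax transformation \eqref{YZ} at $t=0$ with $Q_1,Q_2$ from \eqref{bn-1} (dropping $G_1,G_2,H_1,H_2$), divide out the positive factor $(R_0-S_0)^b$, rewrite via $\xi_0$, and then use $\xi_0=O(\sqrt{\nu})$ to show \eqref{k2} implies \eqref{S0}--\eqref{R0} for small $\nu$. In fact you supply details the paper omits (the $Z$ computation, the $b=-1$ logarithmic case, and the quantitative ``factor-of-two'' argument for the last claim), with only the minor caveat that the uniform positive lower bound on $S_0$ used to control $\epsilon_0\xi_0$ comes from ${\rm (A3)}$ together with the monotonicity $S_0'\ge 0$ in ${\rm (A5)}$, not from ${\rm (A3)}$ alone.
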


\begin{proof}
Let $\Gamma^1$ be the first characteristic curve that emanates form $(x,0)$. Employing the definition of $Y$ in \eqref{YZ}, we get, for $b \neq -1$, that $Y(0) \geq 0$ is the same as
\begin{equation}
\label{Y(0)}
(R_0(x)-S_0(x))^b \left(S'_{0}(x) + \frac {k(x)S_0(x)}{2b} + \frac{k(x)(R_0(x)-S_0(x))}{2(b+1)} \right) \geq 0.
\end{equation}
Since $(R_0(x) - S_0 (x))^b>0$ by condition ${\rm (A3)}$, it is obvious that $Y(0) \geq 0$ if, and only if,
\begin{equation*}
\label{Y(0)1}
S'_{0}(x) + \frac {k(x)S_0(x)}{2b} + \frac{k(x)(R_0(x)-S_0(x))}{2(b+1)}\geq 0.
\end{equation*}
This is equivalent to the first inequality in \eqref{S0} by applying the definition of $\xi$.
In light of Lemma \ref{estimate}, we may choose $\nu_{*}>0$ small enough such that as $\nu\le \nu_{*}$,
the first inequality in \eqref{k2} implies that $Y(0)\ge 0$.
One can apply similar processes for the case $b=-1$ and for $Z$ to derive the remaining parts of \eqref{S0}--\eqref{k2}.
We omit the details.
\end{proof}

If we consider that $\Gamma^1$ or $\Gamma^2$ emanates from  $(x_B, t_B)$, then the next lemma immediately follows from Lemma \ref{key3} and equations $S_t = g - \lambda_1 S_x$ and $R_t = -g - \lambda_2 R_x$ in \eqref{RIE}. We also omit the similar proof.

\begin{lem}\label{key4}
Suppose that conditions ${\rm (A1)}$--${\rm (A5)}$ hold. Then $Y(t_B) \geq 0$ if, and only if,
\begin{equation}
\label{SB}
\begin{cases}
S'_{B}(t) \leq \frac{k(x_B)S^2_B(t)}{2} \left(  \frac{1}{b} +  \frac{1}{b+1}\xi_B(t) + \frac{1-b}{2(b+1)(1-2b)}\xi^2_B(t) \right), & \text{if} \ b \neq -1, \\
S'_{B}(t) \leq \frac{k(x_B)S^2_B(t)}{2} \Big( - 1  +  \epsilon_B(t)\xi_B(t) + \frac{2\epsilon_B(t)+1}{6}\xi^2_B(t) \Big), & \text{if} \ b = -1,
\end{cases}
\end{equation}
for all $t \in [0,\infty)$. And $Z(t_B) \geq 0$ if, and only if,
\begin{equation}
\label{RB}
\begin{cases}
R'_{B}(t) \leq \frac{k(x_B)S^2_B(t)}{2} \left(  \frac{1}{b} +  \frac{b+2}{b(b+1)} \xi_B(t) - \frac{b^2+3b-2}{2b(b+1)(1-2b)} \xi^2_B(t) \right), & \text{if} \ b \neq -1, \\
R'_{B}(t) \leq  \frac{k(x_B)S^2_B(t)}{2} \Big(  -1 - (\epsilon_B(t)+2) \xi_B(t)  - \frac{4 \epsilon_B(t) + 5}{6}  \xi^2_B(t) \Big), & \text{if} \ b = -1,
\end{cases}
\end{equation}
for all $t \in [0,\infty)$. In particular, if
\begin{equation}
\label{k3}
S'_{B}(t) \leq \frac{k(x_B)S^2_B(t)}{b}\quad\text{and}\quad R'_{B}(t) \leq \frac{k(x_B)R^2_B(t)}{b}
\end{equation}
for all $t \in [0,\infty)$, then there is a parameter $\nu_{*}>0$ such that $Y(t_B) \geq 0$ and $Z(t_B) \geq 0$ for $\nu \leq \nu_{*}$.
\end{lem}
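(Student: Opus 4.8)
The plan is to follow the proof of Lemma \ref{key3} almost verbatim, the single new feature being that on the boundary $x=x_B$ the solution prescribes $S_t$ and $R_t$ (namely $S_B'$ and $R_B'$) rather than the spatial derivatives appearing in the Lax transformation \eqref{YZ}; so the work is to trade $S_x,R_x$ for $S_B',R_B'$ through the equations in \eqref{RIE} and then expand in $\xi_B$. Fix a boundary point $(x_B,t_B)$ from which $\Gamma^1$ emanates. Using $h=b\ln(R-S)$ from \eqref{h} and the expression \eqref{bn-1} for $Q_1$ (the term $G_1$ being negligible near vacuum, exactly as in Lemma \ref{key3}), the definition of $Y$ gives, for $b\ne -1$,
\[
Y(t_B)=(R-S)^b\left(S_x+\frac{kS}{2b}+\frac{k(R-S)}{2(b+1)}\right),
\]
all quantities evaluated at $(x_B,t_B)$. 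Since $(R-S)^b>0$ by (A3)--(A4) and Lemma \ref{2.1}, $Y(t_B)\ge 0$ is equivalent to nonnegativity of the bracketed factor.

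Next I would eliminate $S_x$. Along $x=x_B$ one has $S(x_B,t)=S_B(t)$, so $S_t(x_B,t_B)=S_B'(t_B)$, and the first equation of \eqref{RIE}, $S_t+\lambda_1 S_x=g$, gives $S_x=(g-S_B')/\lambda_1$ with $\lambda_1>0$ by supersonicity (Lemma \ref{2.1} and \eqref{eigenRI}). Multiplying the bracket by $\lambda_1>0$, the condition $Y(t_B)\ge 0$ becomes
\[
S_B'(t_B)\le g+\lambda_1\left(\frac{kS}{2b}+\frac{k(R-S)}{2(b+1)}\right).
\]
It then remains to rewrite the right-hand side in $\xi_B=R_B/S_B-1$, using $R-S=S\xi_B$, $\lambda_1=S\bigl(1+\tfrac{3-\gamma}{4}\xi_B\bigr)$ and $g=\tfrac{\gamma-1}{8}kS^2\xi_B(2+\xi_B)$. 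The algebra collapses because of two identities forced by $b=-\tfrac{3-\gamma}{2(\gamma-1)}$: namely $\tfrac{3-\gamma}{4b}=-\tfrac{\gamma-1}{2}$, which cancels the $\xi_B^0$ and $\xi_B^1$ cross terms down to $\tfrac1b$ and $\tfrac{1}{b+1}$, and $1-2b=\tfrac{2}{\gamma-1}$, which turns the $\xi_B^2$ coefficient into exactly $\tfrac{1-b}{2(b+1)(1-2b)}$; this reproduces \eqref{SB}. The case $b=-1$ is identical but with the logarithmic $Q_1$, yielding the $\epsilon_B$-dependent form. The statement for $Z(t_B)\ge 0$ is the mirror image: $Z(t_B)\ge 0$ reduces to a lower bound on $R_x(x_B,t_B)$, which I convert via $R_x=(-g-R_B')/\lambda_2$ (with $\lambda_2>0$) and expand to obtain \eqref{RB}.

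For the ``in particular'' assertion I would argue as in Lemma \ref{key3}. As $\nu\to0$ the right-hand side of \eqref{SB} tends to its leading term $\tfrac{k(x_B)S_B^2}{2}\cdot\tfrac1b$, while hypothesis \eqref{k3} imposes the strictly smaller bound $\tfrac{k(x_B)S_B^2}{b}=\tfrac{k(x_B)S_B^2}{2}\cdot\tfrac2b$. Because $b<0$ one has $\tfrac2b<\tfrac1b$, so the fixed positive gap $\tfrac1{|b|}$ dominates the $O(\xi_B)$ correction in \eqref{SB} once $\xi_B$, hence $\nu$ by Lemma \ref{estimate}, is small enough; since $k\ge 0$ the chain of inequalities is preserved and $Y(t_B)\ge 0$ follows for $\nu\le\nu_*$. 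The same reasoning applied to \eqref{RB}, using $R_B^2=S_B^2(1+O(\xi_B))$, gives $Z(t_B)\ge 0$.

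The conceptual steps --- the PDE substitution, the positivity of $\lambda_1,\lambda_2$, and the limiting comparison for \eqref{k3} --- are all routine. The main obstacle is purely computational: carefully collecting the $\xi_B^2$ contributions from both $g$ and the product $\lambda_1 Q_1 e^{-h}$ and repeatedly invoking the two $b$-identities above to confirm the exact coefficient $\tfrac{1-b}{2(b+1)(1-2b)}$ in \eqref{SB} (and its analogue for \eqref{RB}); the $b=-1$ logarithmic case requires the same care with the $\epsilon_B$ terms.
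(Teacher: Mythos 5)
Your proposal is correct and takes essentially the same route the paper intends: the paper omits the proof, remarking that the lemma ``immediately follows from Lemma \ref{key3} and equations $S_t = g - \lambda_1 S_x$ and $R_t = -g - \lambda_2 R_x$,'' which is precisely your substitution $S_x = (g - S_B')/\lambda_1$, $R_x = (-g - R_B')/\lambda_2$ (valid since $\lambda_1,\lambda_2>0$ under Lemma \ref{2.1}) into the sign conditions for $Y$ and $Z$ from the Lax transformation. Your $\xi_B$-expansion does reproduce the exact coefficients of \eqref{SB}--\eqref{RB}, and the limiting comparison for \eqref{k3} matches the paper's small-$\nu$ argument in Lemma \ref{key3}.
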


\begin{rem}\label{remark1}
Examining inequality \eqref{k1}, there is no $C^1$ function $k$ defined on $\mathbb{R}$ such that inequality \eqref{k1} holds unless $k(x) \equiv 0$ for all $x \in \mathbb{R}$. Nevertheless, there indeed exists a $C^1$ function $k$ defined on an interval of the form $[x_B, \infty)$ such that inequality \eqref{k1} holds. For example, $k(x) = 1/(2x^2 -x)$ and $k(x) = 1/x^2$ or, equivalently, $a(x) = 2 - (1/x)$ and $a(x) =e^{1-(1/x)}$ by recalling $k(x) = a'(x)/a(x)$. This is the reason why we need to consider the initial-boundary value problem \eqref{RIE}. Moreover, using condition ${\rm (A3)}$, inequalities in \eqref{k1} and \eqref{S0}--\eqref{R0} imply that as $\nu$ is sufficiently small, we have
\begin{equation*}
\lim_{x\rightarrow \infty} k(x) = 0 \quad \text{and} \quad \int^\infty_{x_B} k(x)dx<\infty,
\end{equation*}
respectively. In other words, the cross-sectional area of a duct $a(x)$ meets the following restrictions:
\begin{equation*}
	\lim_{x\rightarrow \infty} a(x)  \ \text{exists} \quad \text{and} \ \lim_{x\rightarrow \infty} a'(x) = 0.
\end{equation*}
\end{rem}

With the aid of Lemmas \ref{key1} and \ref{estimate}--\ref{key4}, we are ready to derive the uniform a priori estimates for $R_{x}$ and $S_{x}$, which together with Theorem \ref{TTLi} and Lemma \ref{2.1} gives the following theorem.

\begin{thm}\label{GC}
Let $k(x)$ satisfy condition ${\rm (A1)}$ and inequality \eqref{k1}. Suppose that $S_0(x)$, $R_0(x)$, $S_B(t)$, $R_B(t)$ satisfy conditions ${\rm (A2)}$--${\rm (A5)}$, \eqref{S0}, \eqref{R0}, \eqref{SB}, and \eqref{RB}. Then \eqref{RIE} admits a unique global $C^1$ solution for sufficiently small $\nu$.
\end{thm}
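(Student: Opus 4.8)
The plan is to promote the local solution of Theorem \ref{TTLi} to a global one by establishing uniform (in $T$) a priori $C^1$ bounds and then invoking the standard continuation principle for quasilinear hyperbolic systems. Under ${\rm (A1)}$--${\rm (A4)}$ (which include the compatibility conditions ${\rm (A2)}$), Theorem \ref{TTLi} produces a unique $C^1$ solution of \eqref{RIE} on $I\times[0,T']$ with $T'$ bounded below in terms only of the $C^1$ norms of $k,S_0,R_0,S_B,R_B$. Hence it suffices to show that whenever a $C^1$ solution exists on $I\times[0,T]$, the $C^0$ norms of $S$, $R$, $S_x$, and $R_x$ are bounded by a constant independent of $T$: the uniform lower bound on the local existence time then lets me re-apply the local theorem step by step to reach $[0,\infty)$, while uniqueness propagates from the local uniqueness in Theorem \ref{TTLi}.

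First I would dispose of the zeroth-order estimates. Lemma \ref{2.1} gives at once $0<S(x,t)<R(x,t)\le M$ on $I\times[0,T]$, so $S$, $R$, and therefore $\rho$ and $v$ are uniformly bounded and the flow remains supersonic ($\lambda_1>0$ by \eqref{eigenRI}). Lemma \ref{estimate} then supplies the crucial smallness $0<\xi(x,t)=O(\sqrt\nu)$; this is exactly what allows every $O(\xi)$ remainder in \eqref{A04} and in the coefficients \eqref{A02}--\eqref{A03} to be absorbed as a controllable perturbation once $\nu$ is taken small, and it is the mechanism by which the two Riccati equations \eqref{Y07} and \eqref{Z01} become genuine perturbations of one another through the formal symmetry $\mathcal{B}(S,R)=\widehat{\mathcal{B}}(R,S)$.

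The heart of the argument is the first-order estimate for $S_x$ via the generalized Lax transformation \eqref{YZ} and the Riccati equation \eqref{Y07}. Under ${\rm (A1)}$--${\rm (A5)}$, \eqref{k1}, and small $\nu$, Lemma \ref{key2} gives $\mathcal{C}\ge0$, so the right-hand side of \eqref{Y07} factors as $\mathcal{A}(Y-Y_1(t))(Y-Y_2(t))$ with $\mathcal{A}<0$ and $Y_1\le0\le Y_2$, i.e. $Y=0$ is a horizontal separating line. For each first characteristic $\Gamma^1$ through a given point, the emanating-point conditions \eqref{S0} (on $\{t=0\}$, via Lemma \ref{key3}) and \eqref{SB} (on $\{x=x_B\}$, via Lemma \ref{key4}) guarantee $Y\ge0$ there, so Lemma \ref{key1} yields
\begin{equation*}
0\le Y(t)\le Y(T_1)-\frac14\int_{T_1}^{t}\mathcal{A}(s)\big(Y_2(s)-Y_1(s)\big)^2\,ds.
\end{equation*}
Inverting \eqref{YZ} then gives the lower bound $S_x\ge -e^{-h}Q_1$ together with the upper bound
\begin{equation*}
S_x(x,t)\le e^{-h}\left(Y(T_1)-\frac14\int_{T_1}^{t}\mathcal{A}\,(Y_2-Y_1)^2\,ds-Q_1\right),
\end{equation*}
which is the controlling inequality for $S_x$ referred to in the introduction; a symmetric computation with \eqref{Z01}, \eqref{R0}, and \eqref{RB} delivers the matching bounds for $R_x$.

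The main obstacle is showing that this upper bound is uniform in $T$, and the point stressed in the introduction is decisive: since $Y_2(t)$ need not be bounded, one cannot majorize $Y$ by $\sup Y_2$, and must instead prove that $\int_{T_1}^{t}(-\mathcal{A})(Y_2-Y_1)^2\,ds$ converges uniformly in $t$. Writing $(Y_2-Y_1)^2=(\mathcal{B}^2-4\mathcal{A}\mathcal{C})/\mathcal{A}^2$, the integrand equals $4\mathcal{C}-\mathcal{B}^2/\mathcal{A}\ge0$, which by \eqref{A02} and \eqref{A04} is a combination of $k^2$ and $k'$ multiplied by near-vacuum powers of $S$ and $\xi$ (equivalently of $R-S$). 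The prefactor $e^{-h}=(R-S)^{-b}$ is bounded above by $M^{-b}$ and $e^{-h}Q_1$ stays bounded because its singular powers $(R-S)^{-b}$ and $(R-S)^{b}$ cancel, while $e^{-h}Y(T_1)$ is bounded by the $C^1$ data; so everything reduces to the single integral. To estimate it I would pass from $t$ to $x$ along $\Gamma^1$ using $dx/dt=\lambda_1>0$ and then exploit condition \eqref{k1}, which forces $k'\le0$ and $\int_{x_B}^{\infty}k\,dx<\infty$ (Remark \ref{remark1}), hence $\int_{x_B}^{\infty}(k^2,|k'|)\,dx<\infty$. The genuinely delicate part is that the accompanying powers of $S$ and $\xi$ are \emph{not} bounded below uniformly in $T$ (both tend to $0$ in the near-vacuum, large-time regime), so the integrability of $k$ alone does not close the estimate; one must pair each singular power of $R-S$ with the decay of $k$ and control the result through the maximum principle (Lemma \ref{2.1}), the monotonicity of $S$ and $R$ along characteristics, and the smallness $\xi=O(\sqrt\nu)$ from Lemma \ref{estimate}. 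This term-by-term bookkeeping, in which the geometric condition \eqref{k1} is exactly what matches the singular near-vacuum factors, is where the real work lies. Carrying out the analogous estimate for $R_x$ completes the uniform a priori $C^1$ bound, and the continuation argument then furnishes the unique global $C^1$ solution of \eqref{RIE} for $\nu$ sufficiently small.
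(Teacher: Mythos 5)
Your proposal follows the paper's skeleton exactly (local existence plus continuation; Lemma \ref{2.1} for the $C^0$ bound; the Lax transformation \eqref{YZ}, the Riccati equation \eqref{Y07}, Lemmas \ref{key2}--\ref{key4} and \ref{key1} for the sign and growth of $Y$, $Z$), but it stops short of the one step that actually closes the argument, and the route you sketch for that step would not work. You propose to bound the prefactor $e^{-h}=(R-S)^{-b}(x,t)$ crudely by $M^{-b}$ and then control the remaining integral $\int_0^t(-\mathcal{A})(Y_2-Y_1)^2\,ds$ on its own. That integral, written out via \eqref{A02}, has integrand of the form $\bigl((R-S)(x(s),s)\bigr)^{b}\,\bigl(k^2,\,k'\bigr)\,S^2\,(1+O(\xi))$, and the factor $(R-S)^{b}(x(s),s)$ (with $b<0$) is \emph{unbounded} along the characteristic: $R-S$ decreases in time and in general tends to $0$ (this is precisely the near-vacuum, large-time regime of Proposition \ref{prop1}), so $(R-S)^b\to\infty$. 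No pairing of this singular factor ``with the decay of $k$'' can rescue the estimate, because there is no quantitative relation between the rate at which $R-S$ degenerates along a characteristic and the rate at which $k(x(s))$ decays; the integral simply cannot be bounded by the integrability of $k$ alone. The same crude splitting also fails for the term $e^{-h}Y(T_1)$ when $I=[x_B,\infty)$, since $Y(0)=(R_0-S_0)^b(x_0)(\cdots)$ need not be bounded uniformly over emanating points $x_0$ if $R_0-S_0\to 0$ at infinity.

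The paper's resolution is a cancellation you never state. First, from the left inequality in \eqref{Sx} (i.e.\ $S_x\ge -(R-S)^{-b}Q_1$, which by \eqref{Q1} and Lemma \ref{estimate} is nonnegative for small $\nu$) and its analogue for $Z$, one gets $S_x\ge 0$ and $R_x\ge 0$; feeding this back into \eqref{RIE} shows $S$ is increasing and $R$ is decreasing along \emph{both} families of characteristics, hence $R-S$ is non-increasing along $\Gamma^1$, which is \eqref{leq1}. This monotonicity is then used multiplicatively: for $s\le t$,
\begin{equation*}
(R-S)^{-b}(x,t)\,\bigl((R-S)(x(s),s)\bigr)^{b}=\Bigl(\tfrac{(R-S)(x,t)}{(R-S)(x(s),s)}\Bigr)^{-b}\le 1,
\end{equation*}
so the singular factor inside $J_1$, $J_2$ (and in $Y(0)$) is annihilated by the prefactor rather than bounded separately. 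Only after this cancellation does the problem reduce to $\int_0^t k^2(x(s))\,S\,ds$ and $\int_0^t|k'(x(s))|\,S\,ds$, which are handled as you indicated (using $S\le\lambda_1=dx/dt$, the monotonicity of $k$ from \eqref{k1}, and a change of variables to get the bounds $\|k\|_{C^0}\int_{x_B}^{\infty}k\,dy$ and $k(x_B)$ via Remark \ref{remark1}). So the ingredients you list at the end (maximum principle, monotonicity along characteristics, $\xi=O(\sqrt{\nu})$) are the right ones, but the actual mechanism --- establish $S_x,R_x\ge0$ first, deduce \eqref{leq1}, and use it to cancel the prefactor against the singular factor inside the time integrals --- is missing, and this is exactly ``where the real work lies'' that your proposal defers.
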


\begin{proof}
By Theorem \ref{TTLi} and Lemma \ref{2.1}, it suffices to prove that, for any fixed $T>0$, if \eqref{RIE} admits a unique $C^1$ solution on $I \times [0,T]$, then the $C^0$ norms of $S_{x}$ and $R_{x}$ have upper bounds independent of $T$. We only show the case that $S_x$ is bounded on $I \times [0,T]$ with a bound independent of $T$ for $b\ne -1$ since the other cases can be done in a similar way.

Let $(x,t)$ be any given point in $I \times (0,T]$.
Then the first characteristic curve $\Gamma^1$ passing through $(x,t)=(x(t),t)$ emanates from either $(x_0,0)=(x(0),0)$ or $(x_B,t_B)=(x(t_B),t_B)$.
We only prove for the first case since the second one can be treated similarly. From Lemmas \ref{key2}--\ref{key3}, we have $\mathcal{C}(\nu) \geq 0$
and $Y(0) \geq 0$ for sufficiently small $\nu$. Lemma \ref{key1} tells us that
\begin{equation}
\label{Yi01}
0 \leq  Y(t) \leq Y(0) -  \frac{1}{4} \int_{0}^{t}  \mathcal{A}(x(s),s) \left(Y_2(s) - Y_1(s) \right)^2 \ ds.
\end{equation}
Since $Y_2 - Y_1 = \sqrt{\mathcal{B}^2 - 4\mathcal{A}\mathcal{C}}/(-\mathcal{A})$, the inequalities in \eqref{Yi01} can be rewritten as
\begin{equation}
\label{Yi02}
0 \leq  Y(t) \leq Y(0) -  \frac{1}{4} \int_{0}^{t} \frac{\mathcal{B}^2(x(s),s)}{\mathcal{A}(x(s),s)}  - 4 \mathcal{C} (x(s),s) \ ds.
\end{equation}
Substituting the expressions for $\mathcal{A}, \mathcal{B}$, and $\mathcal{C}$ in \eqref{A02} into the integral in \eqref{Yi02} and using the definition of $\xi$ in \eqref{xi0}, a simple manipulation shows that
\begin{equation}
\label{Yi03}
0 \leq  Y(t) \leq Y(0) + J_1(t) + J_2(t),
\end{equation}
where
\begin{equation*}
J_1(t):= \int_{0}^{t} \Big( (R-S)(x(s),s) \Big)^{b} k^2(x(s)) S^2(x(s),s) \bigg(-\frac{2b+7}{16(1-b)} + O(\xi)\bigg) \ ds
\end{equation*}
and
\begin{equation*}
J_2(t):= \int_{0}^{t} \Big( (R-S)(x(s),s) \Big)^{b} k'(x(s)) S^2(x(s),s) \bigg(\frac{1}{2b} + O(\xi)\bigg) \ ds.
\end{equation*}
From the definition of $Y$ in \eqref{YZ}, we see that \eqref{Yi03} is equivalent to
\begin{equation}
\label{Sx}
-\left((R-S)^{-b} Q_1\right)(x,t) \leq  S_{x}(x,t) \leq (R-S)^{-b}(x,t) \big( Y(0) + J_1(t) + J_2(t) - Q_1(x,t) \big).
\end{equation}

We now prove that each of $(R-S)^{-b} Q_1$, $(R-S)^{-b} Y(0)$, $(R-S)^{-b} J_1$, and $(R-S)^{-b} J_2$ has a bound on $I \times [0,T]$ independent of $T$ as follows.
Using the expression for $Q_1$ in \eqref{bn-1} yields that
\begin{equation}
\label{Q1}
\left((R-S)^{-b} Q_1\right) (x,t) = \frac{k(x)S(x,t)}{2b} + \frac{k(x)(R-S)(x,t)}{2(b+1)}.
\end{equation}
By conditions ${\rm (A1)}$, ${\rm (A3)}$, and ${\rm (A4)}$, Lemma \ref{2.1} says that there exists a constant $M>0$ independent of $T$ such that
\begin{equation}
\label{M}
0 < S(x,t) < R(x,t) \leq M  \quad \text{for all} \ (x,t) \in I\times [0,T],
\end{equation}
which together with condition ${\rm (A1)}$ again show that $(R-S)^{-b} Q_1$ has a bound on $I \times [0,T]$ independent of $T$.

To estimate $(R-S)^{-b} Y(0)$, we observe that
\begin{equation*}
(R-S)^{-b}(x,t) Y(0) = \left( \frac{(R-S)(x,t)}{(R_0 - S_0)(x_0)} \right)^{-b}  \left(S'_{0} (x_0) + \frac {(kS_0)(x_0)}{2b} + \frac{(k(R_0-S_0))(x_0)}{2(b+1)} \right).
\end{equation*}
Using the definition of $\xi$, the left inequality in \eqref{Sx} and equality \eqref{Q1} exhibit
\begin{equation}
\label{Sx>0}
- ((R-S)^{-b} Q_1)(x,t) = \frac{k(x)S(x,t)}{2} \left( \frac{1}{-b} - \frac{\xi(x,t)}{b+1}\right) \leq S_x.
\end{equation}
From Lemma \ref{estimate} and \eqref{Sx>0}, we find that if $\nu$ is sufficiently small, then $S_x \geq 0$ on $[0,T]$. Similarly, $R_x \geq 0$ on $[0,T]$ as we take $Z$ into consideration. In view of \eqref{RIE},
we have
\begin{equation*}
S_t + \lambda_{i} S_x = (\lambda_i - \lambda_1) S_x + g \quad \text{and} \quad R_t +  \lambda_{i} R_x = ( \lambda_{i} -  \lambda_{2}) R_x - g \quad \text{for}\ i=1,2,
\end{equation*}
which implies that $S$ is increasing and $R$ is decreasing along all characteristic curves $\Gamma^{1}$ and $\Gamma^{2}$ given in \eqref{Gamma1} and \eqref{Gamma2}, respectively. In particular,
\begin{equation}
\label{leq1}
(R-S)(x(s_2),s_2) \leq (R-S)(x(s_1),s_1)\quad \text{for all}\ 0\le s_1\le s_2\le T.
\end{equation}
Hence we infer from conditions ${\rm (A1)}$, ${\rm (A3)}$, and \eqref{leq1} that $(R-S)^{-b} Y(0)$ has a bound on $I \times [0,T]$ independent of $T$.

We now turn to the estimate of $(R-S)^{-b} J_1$. With the help of Lemma \ref{estimate}, \eqref{M}, and \eqref{leq1}, there exists a constant $C_1>0$ depending only on $b$, $\nu$, and $M$ such that
\begin{equation}
\label{I103}
|(R-S)^{-b}(x,t)J_1(t)|\le C_1\int_{0}^{t} k^2(x(s)) S(x(s),s) \, ds\quad \text{for all} \ (x,t) \in I\times [0,T]
\end{equation}
as $\nu$ is sufficiently small. To control the integral in \eqref{I103}, we note that $dx/dt = \lambda_{1}$ and $S \leq \lambda_{1}$, which yield that, for all $s \in [0,t]$,
\begin{equation*}
\label{intS1}
x_0+\int_{0}^{s}  S (x(\tau),\tau)  \, d\tau \leq x(s).
\end{equation*}
Applying \eqref{k1}, we get that $k$ is decreasing. Thus,
\begin{equation*}
k(x(s)) \leq k \bigg(x_0+\int_{0}^{s}  S (x(\tau),\tau) \, d\tau \bigg) \quad \text{for}\ s \in [0, t].
\end{equation*}
We conclude from change of variables and condition ${\rm (A1)}$ that
\begin{equation}\label{intS3}
\int_{0}^{t} k^2(x(s)) S(x(s),s) \, ds \leq \| k \|_{C^0} \int_{0}^{t} k \bigg(x_0+\int_{0}^{s}  S (x(\tau),\tau) \, d\tau \bigg) S(x(s),s) \, ds \le \| k \|_{C^0} \int_{x_B}^{\infty} k(y) dy.
\end{equation}
As a consequence of Remark \ref{remark1}, \eqref{I103}, and \eqref{intS3}, we obtain that $(R-S)^{-b} J_1$ has a bound on $I \times [0,T]$ independent of $T$.

Last, similar to \eqref{I103}, we also have
\begin{equation}
\label{I104}
|(R-S)^{-b}(x,t)J_2(t)|\le -C_2\int_{0}^{t} k'(x(s)) S(x(s),s) \, ds:=J_3(t)\quad \text{for all} \ (x,t) \in I\times [0,T]
\end{equation}
as $\nu$ is sufficiently small, where $C_2>0$ depends only on $b$, $\nu$, and $M$ and we have used the fact that $k' \leq 0$. Since $dx/dt = \lambda_1$ and $S \leq \lambda_1$,
\begin{equation}
\label{J3}
J_{3}(t) =  -C_2 \int_{0}^{t} \frac{S(x(s),s)}{\lambda_{1} (x(s),s)} \ k'(x(s)) x'(s) \, ds\le -C_2 \int_{0}^{t} k'(x(s)) x'(s) \, ds  \le -C_2 \int_{x_B}^{\infty} k'(z)  \, dz.
\end{equation}
Remark \ref{remark1} indicates that
$$- \int_{x_B}^{\infty} k'(z)  \, dz = k(x_B),$$
which together with \eqref{I104}--\eqref{J3} proves that $(R-S)^{-b} J_2$ has a bound on $I \times [0,T]$ independent of $T$.
We complete the proof.
\end{proof}

To establish the global existence of the classical solution $[\rho_{\eta}, v]^T$ for \eqref{a.1}, we make use of \eqref{a.5} and \eqref{rhov} to obtain that
\begin{equation}
\label{sol}
\rho_{\eta} = \left(\frac{ (\gamma -1)^2 }{16 \gamma} \right)^{\frac{1}{\gamma -1}} \left(R-S \right)^{\frac{2}{\gamma-1}} \ \ \text{and} \ \ v = \frac{S+R}{2}.
\end{equation}
Since Theorem \ref{GC} says that $S$ and $R$ in \eqref{RIE} are $C^1$ functions on $I \times [0, \infty)$, it is easy to see that $\rho_{\eta}$ and $v$ in \eqref{sol} are also $C^1$ functions on $I \times [0, \infty)$ under the assumptions of Theorem \ref{GC}. That is, we have the following corollary.

\begin{cor}\label{cor}
Under the assumptions of Theorem \ref{GC}, problem \eqref{a.1} admits a unique global $C^1$ solution for sufficiently small $\eta$.
\end{cor}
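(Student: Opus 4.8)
The plan is to transport the global existence already obtained for the Riemann-invariant system \eqref{RIE} back to the original problem \eqref{a.1} through the explicit change of variables. First I would record that the rescaling parameter and the vacuum parameter are linked by $\nu=\frac{\gamma}{\gamma-1}\eta^{\gamma-1}$; since $1<\gamma<3$ forces $\gamma-1>0$, the map $\eta\mapsto\nu$ is a continuous increasing bijection of $(0,\infty)$ onto itself, so ``$\nu$ sufficiently small'' and ``$\eta$ sufficiently small'' are interchangeable. Hence the smallness threshold $\nu_*$ produced in Theorem \ref{GC} corresponds to a threshold $\eta_*$.

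Under the hypotheses listed, Theorem \ref{GC} yields a unique global $C^1$ solution $(S,R)$ of \eqref{RIE} on $I\times[0,\infty)$ for $\eta\le\eta_*$. I would then define $(\rho_\eta,v)$ by formula \eqref{sol}, that is, $v=(S+R)/2$ and $\rho_\eta=\big(\tfrac{(\gamma-1)^2}{16\gamma}\big)^{1/(\gamma-1)}(R-S)^{2/(\gamma-1)}$, which is exactly what one obtains by combining the Riemann-invariant inversion \eqref{rhov} with the rescaling \eqref{a.5} (the $\nu$-factors cancel). The only regularity point needing care is $\rho_\eta$: the map $u\mapsto u^{2/(\gamma-1)}$ is $C^1$ on $(0,\infty)$ because the exponent $2/(\gamma-1)$ is positive, so its derivative $\frac{2}{\gamma-1}u^{2/(\gamma-1)-1}$ is continuous away from $u=0$. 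By Lemma \ref{2.1} we have $R-S>0$ throughout $I\times[0,\infty)$, so the composition of this $C^1$ power with the $C^1$ function $R-S$ is $C^1$, while $v$ is manifestly $C^1$. This is the step I expect to be the only genuinely delicate one, precisely because we operate near vacuum where $R-S$ may be small—though never zero, which is what saves the argument.

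It remains to check that $(\rho_\eta,v)$ actually solves \eqref{a.1}. Here I would invoke the reversibility of the reduction used to pass from \eqref{a.8} to \eqref{RIE}: on the open region $\{\rho>0\}=\{S<R\}$ the correspondence $(\rho,v)\leftrightarrow(S,R)$ given by \eqref{RI}--\eqref{rhov} is a $C^1$ diffeomorphism, and the direct computation that turns \eqref{a.8} into \eqref{RIE} is an equivalence there. Thus $(S,R)$ solving \eqref{RIE} gives $(\rho,v)$ solving \eqref{a.8}, and the rescaling \eqref{a.5} returns a solution $(\rho_\eta,v)$ of \eqref{a.1}. The initial and boundary conditions transfer automatically: the data $S_0,R_0,S_B,R_B$ were defined from $\rho_0,v_0,\rho_B,v_B$ through \eqref{RI} and \eqref{a.5}, so recovering $(\rho_\eta,v)$ by \eqref{sol} reproduces the prescribed data in \eqref{a.1}. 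Finally, uniqueness is inherited: any $C^1$ solution of \eqref{a.1} with $\rho_\eta>0$ maps, via the same diffeomorphism, to a $C^1$ solution of \eqref{RIE}, which is unique by Theorem \ref{GC}; pulling back gives uniqueness for \eqref{a.1}. This completes the proposed argument.
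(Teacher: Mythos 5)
Your proposal is correct and takes essentially the same route as the paper: the paper likewise recovers $(\rho_\eta,v)$ from $(S,R)$ via \eqref{sol} (i.e., \eqref{rhov} composed with the rescaling \eqref{a.5}, the $\nu$-factors cancelling) and simply transfers the global $C^1$ regularity furnished by Theorem \ref{GC}. The details you make explicit — the monotone bijection $\eta\mapsto\nu$ identifying the two smallness thresholds, the positivity $R-S>0$ from Lemma \ref{2.1} that makes the power $(R-S)^{2/(\gamma-1)}$ unproblematic, and the invertibility of the reduction \eqref{a.1}$\leftrightarrow$\eqref{a.8}$\leftrightarrow$\eqref{RIE} giving existence and uniqueness transfer — are left implicit in the paper but are precisely the right justifications.
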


In Lemmas \ref{key3} and \ref{key4}, we see that \eqref{k2} and \eqref{k3} are simplified but more strict forms than \eqref{S0}--\eqref{R0} and \eqref{SB}--\eqref{RB}, respectively. It is easy to find that there is a large class of $S_0(x)$, $R_0(x)$, $S_B(t)$, and $R_B(t)$ that satisfy all conditions of ${\rm (A2)}$--${\rm (A5)}$, \eqref{k2}, and \eqref{k3}.
Hence there exist a lot of functions that satisfy all the initial and boundary conditions of Theorem \ref{GC}.

It is worth noting that, in the proof of Theorem \ref{GC}, we have $Y(t) \geq 0$ along $\Gamma^{1}$ and $Z(t) \geq 0$ along $\Gamma^{2}$ for any given point $(x,t) \in I \times [0, \infty)$, where $\Gamma^{i}$, $i=1,2$, is the $i$th characteristic curve passing through $(x,t)$. Thus, the $C^1$ solution in Theorem \ref{GC} preserves inequalities in \eqref{S0}--\eqref{R0} and \eqref{SB}--\eqref{RB} for any $x \geq x_B$ and $t \geq 0$. More precisely, for $ b \neq -1$ and  $(x,t) \in I \times [0, \infty)$, we have
\begin{equation*}
\label{SRx}
\begin{aligned}
& \frac{k(x)S(x,t)}{2} \left( \frac{1}{-b} - \frac{1}{b+1} \xi(x,t) \right)  \leq S_x(x,t), \\
& \frac{k(x)S(x,t)}{2} \left( \frac{1}{-b} - \frac{1}{b(b+1)} \xi(x,t) \right) \leq R_{x}(x,t), \\
\end{aligned}
\end{equation*}
and
\begin{equation*}
\label{SRt}
\begin{aligned}
& S_t (x,t) \leq \frac{k(x) S^2(x,t)}{2} \left(  \frac{1}{b} +  \frac{1}{b+1}\xi(x,t) + \frac{1-b}{2(b+1)(1-2b)}\xi^2(x,t) \right),  \\
& R_t (x,t) \leq \frac{k(x) S^2(x,t)}{2} \left(  \frac{1}{b} +  \frac{b+2}{b(b+1)} \xi(x,t) - \frac{b^2+3b-2}{2b(b+1)(1-2b)} \xi^2(x,t) \right).
\end{aligned}
\end{equation*}
The corresponding inequalities for $b =-1$ also hold. We omit them for brevity. In particular, as $\nu$ is sufficiently small, we have
\begin{equation}
\label{SRxt}
\frac{k(x) S(x,t)}{-4b} \leq  \min \Big\{ S_x (x,t), R_x(x,t) \Big\} \ \ \text{and} \ \ \max \Big\{ S_t(x,t), R_t(x,t) \Big \} \leq \frac{k(x) S^2(x,t)}{4b}
\end{equation}
on $I \times [0, \infty)$ for all $b <0$. Making use of \eqref{SRxt}, we are able to describe the behavior of $S$, $R$, and related physical quantities as $t \rightarrow \infty$ in the following two propositions.

\begin{prop}\label{prop1}
Under the assumptions of Theorem \ref{GC}, for any fixed $x \in I$ with $k(x) >0$ and $\nu$ sufficiently small, $S(x,t)$, $R(x,t)$, $v(x,t)$ decrease to zero, and $\rho (x,t)$ tends to zero as $t\rightarrow\infty$. In this case, the convergent rates of $S(x,t), R(x,t), v(x,t)$, and $\rho^{(\gamma-1)/2}(x,t)$ are $O(1/t)$ as $t\rightarrow\infty$. Furthermore, all the convergences are uniform on $[x_B, x_*]$ for any $x_{*} \in I$ with $k(x_{*}) > 0$.
\end{prop}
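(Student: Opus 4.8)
The plan is to read off from \eqref{SRxt} an autonomous Riccati-type differential inequality satisfied by $S$ along each vertical line $x=\text{const}$, and then integrate it. Fix $x\in I$ with $k(x)>0$ and set $f(t):=S(x,t)$, which is $C^1$ in $t$ by Theorem \ref{GC} and satisfies $f(t)>0$ on $[0,\infty)$ by Lemma \ref{2.1}. The right-hand inequality in \eqref{SRxt} gives $f'(t)=S_t(x,t)\le \frac{k(x)}{4b}f^2(t)=-c\,f^2(t)$, where $c:=-k(x)/(4b)>0$ because $b<0$ and $k(x)>0$. In particular $f'\le 0$, so $S(x,\cdot)$ is non-increasing, and being positive it has a limit.

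Next I would integrate this bound. Dividing by $f^2>0$ gives $(1/f)'(t)\ge c$, and integrating from $0$ to $t$ yields $1/f(t)\ge 1/f(0)+ct$, that is,
\begin{equation*}
0<S(x,t)\le \frac{S(x,0)}{1+c\,S(x,0)\,t}\le \frac{1}{c\,t}=\frac{-4b}{k(x)\,t}.
\end{equation*}
This simultaneously shows $S(x,t)\downarrow 0$ and gives the rate $S(x,t)=O(1/t)$. The remaining quantities are then handled by $\xi=R/S-1=O(\sqrt\nu)$ from Lemma \ref{estimate}: since $R=S(1+\xi)$ with $\xi$ bounded, $R(x,t)=O(1/t)$, and the other half of \eqref{SRxt}, $R_t(x,t)\le k(x)S^2/(4b)\le 0$, shows $R$ is non-increasing, hence $R\downarrow 0$. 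Consequently $v=(S+R)/2$ satisfies $v_t=(S_t+R_t)/2\le 0$ and $v=O(1/t)$. For the density, \eqref{rhov} gives $\rho^{(\gamma-1)/2}=\text{const}\cdot(R-S)=\text{const}\cdot S\xi$, so $\rho^{(\gamma-1)/2}=O(1/t)$ and $\rho=(\rho^{(\gamma-1)/2})^{2/(\gamma-1)}\to 0$.

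For the uniform statement, the key observation is that condition \eqref{k1} forces $k'\le k^2/((2-\delta)b)\le 0$ (again using $b<0$), so $k$ is non-increasing on $I$. Hence if $k(x_*)>0$ then $k(x)\ge k(x_*)>0$ for every $x\in[x_B,x_*]$, and since $c(x)=-k(x)/(4b)$ is increasing in $k$, the coercivity constant is bounded below uniformly: $c(x)\ge -k(x_*)/(4b)=:c_*>0$. The displayed bound then becomes $S(x,t)\le 1/(c_*t)$ uniformly in $x\in[x_B,x_*]$, and the same uniform rate propagates to $R$, $v$, and $\rho^{(\gamma-1)/2}$ through the bounded factors $1+\xi$ and $\xi$.

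The main points requiring care are minor but worth flagging: one must justify that $S_t(x,\cdot)$ is genuinely the ordinary $t$-derivative of the vertical-line profile (supplied by the $C^1$ regularity in Theorem \ref{GC}) and that $S$ stays strictly positive so that division by $f^2$ is legitimate (supplied by Lemma \ref{2.1}). The only genuinely structural input is that \eqref{k1} makes $k$ monotone, which is what upgrades the pointwise-in-$x$ decay to a uniform decay on $[x_B,x_*]$; everything else is the elementary integration of the scalar Riccati inequality.
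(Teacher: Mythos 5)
Your proposal is correct and follows essentially the same route as the paper's own proof: both integrate the differential inequality $S_t \le k(x)S^2/(4b)$ from \eqref{SRxt} along the vertical line to get $S(x,t)=O(1/t)$, transfer the decay to $R$, $v$, and $\rho^{(\gamma-1)/2}$ via $R=S(1+\xi)$ with $\xi=O(\sqrt{\nu})$ from Lemma \ref{estimate}, and use the monotonicity of $k$ forced by \eqref{k1} to make the decay uniform on $[x_B,x_*]$. The only cosmetic difference is that you discard the initial datum to get the cleaner bound $S(x,t)\le -4b/(k(x)t)$, whereas the paper keeps the bound $S(x,t)\le\bigl(1/S_0(x)-k(x)t/(4b)\bigr)^{-1}$; both yield the same conclusion.
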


\begin{proof}
Let $x \in I$ be fixed. Since $S >0$ by Lemma \ref{2.1}, the second inequality in \eqref{SRxt} implies $S_t(x,t)/S^2(x,t) \leq k(x)/(4b) $. Integrating the both sides yields that
\begin{equation}
	\label{Stend01}
	S(x,t) \leq  \left(\frac{1}{S_0(x)} - \frac{k(x)}{4b} \ t \right)^{-1}
\end{equation}
and hence
\begin{equation}
	\label{Stend0}
	\lim_{t \rightarrow \infty} S(x,t) = 0\quad \text{for}\ \nu\ \text{sufficiently small}.
\end{equation}
It is obvious from the definition of $\xi$ and Lemma \ref{estimate} that
\begin{equation*}
\label{SRS}
S \leq R = S (\xi +1 )\leq 2S\quad \text{for}\ \nu\ \text{sufficiently small},
\end{equation*}
which along with \eqref{Stend0} immediately gives that
\begin{equation}
\label{Rtend0}
\lim_{t \rightarrow \infty} R(x,t) = 0\quad \text{for}\ \nu\ \text{sufficiently small}.
\end{equation}
Therefore, for $\nu$ sufficiently small, $v(x,t)$ and $\rho(x,t)$ tend to zero as $t\rightarrow\infty$ by \eqref{Stend0}--\eqref{Rtend0} and the fact that $v=(S+R)/2$ and $R - S = O(\sqrt{\nu}) \rho^{(\gamma-1)/2}$. The decreasing properties of $S$, $R$ and $v$ follow directly from $S_t \leq 0$ and $R_t \leq0$ shown in \eqref{SRxt}. In addition, the above arguments also show that the convergence rates of $S(x,t), R(x,t), v(x,t)$, and $\rho^{(\gamma-1)/2}(x,t)$ are $O(1/t)$ as $t\rightarrow\infty$.

Last, for any $x_{*} \in I$ such that $k(x_{*}) > 0$, we have $k(x_{*}) \leq k(x)$ for all $x \in [x_B, x_{*}]$ since $k$ is decreasing by inequality \eqref{k1}. Consequently, the convergence of $S$ is uniform on $[x_B, x_*]$ by \eqref{Stend01}.
This together with the preceding arguments shows that all the convergences are uniform on $[x_B, x_*]$.
\end{proof}

\begin{prop}\label{prop2}
Under the assumptions of Theorem \ref{GC}, $S(x,t)$ is increasing, and $R(x,t)$ and $\xi(x,t)$ are decreasing along the characteristic curves $\Gamma^{i}$ $(i=1,2)$ as $\nu$ is sufficiently small. In this case, $\xi \rightarrow 0$ along $\Gamma^{i}$ $(i=1,2)$ if, and only if, $\rho \rightarrow 0$ along $\Gamma^{i}$ $(i=1,2)$.
\end{prop}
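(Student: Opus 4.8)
The plan is to read off all four assertions from the sign information on the first derivatives of $S$ and $R$ that was already assembled in the proof of Theorem~\ref{GC}, without revisiting the Riccati equations. For the monotonicity of $S$ and $R$ along both characteristic families, I would start from the decomposition used just before \eqref{leq1}: writing $\dot{\,}$ for the derivative along $\Gamma^i$ ($i=1,2$), the equations in \eqref{RIE} give
\begin{equation*}
\dot S = S_t + \lambda_i S_x = (\lambda_i - \lambda_1)S_x + g, \qquad \dot R = R_t + \lambda_i R_x = (\lambda_i - \lambda_2)R_x - g,
\end{equation*}
with $g \ge 0$ by \eqref{g}. From \eqref{eigenRI} one computes $\lambda_2 - \lambda_1 = \tfrac{\gamma-1}{2}(R-S) > 0$, while \eqref{SRxt} furnishes $S_x \ge 0$ and $R_x \ge 0$ for $\nu$ small. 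Hence each summand above carries a definite sign, yielding $\dot S \ge 0$ and $\dot R \le 0$ along both $\Gamma^1$ and $\Gamma^2$; this is exactly the claimed monotonicity of $S$ and $R$.

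For $\xi$ I would use the identity $\xi + 1 = R/S$ from \eqref{xi0} and differentiate along $\Gamma^i$ by the quotient rule,
\begin{equation*}
\dot\xi = \frac{S\,\dot R - R\,\dot S}{S^2}.
\end{equation*}
By Lemma~\ref{2.1} we have $S,R>0$, and by the previous step $\dot R \le 0$ and $\dot S \ge 0$, so the numerator satisfies $S\,\dot R - R\,\dot S \le 0$ and therefore $\dot\xi \le 0$. Thus $\xi$ is decreasing along every characteristic, with no further computation required.

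For the equivalence I would combine the two representations $\rho = c\,(R-S)^{2/(\gamma-1)}$ with $c>0$ from \eqref{rhov} and $\xi = (R-S)/S$ from \eqref{xi0}. The former shows at once that $\rho \to 0$ along $\Gamma$ is equivalent to $R-S \to 0$ along $\Gamma$. For the latter I need $S$ to remain in a compact subinterval of $(0,\infty)$ along the fixed characteristic $\Gamma$: since $S$ is increasing along $\Gamma$ by the first step, $S \ge S(x_0,t_0) > 0$ at the emanation point $(x_0,t_0)$ of $\Gamma$, while $S \le M$ by Lemma~\ref{2.1}. With $0 < S(x_0,t_0) \le S \le M$, the quantity $\xi = (R-S)/S$ is squeezed between $(R-S)/M$ and $(R-S)/S(x_0,t_0)$, so $\xi \to 0 \iff R-S \to 0 \iff \rho \to 0$. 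The one genuinely delicate point is precisely this uniform lower bound on $S$ along $\Gamma$: were $S$ allowed to decay to $0$, the ratio $\xi=(R-S)/S$ could stay bounded away from $0$ even as $R-S\to 0$, so the equivalence would fail. The monotonicity of $S$ is what rules this out, and for that reason I would prove the monotonicity statements first and present the equivalence as its consequence.
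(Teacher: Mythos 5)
Your proposal is correct and follows essentially the same route as the paper: the monotonicity of $S$ and $R$ along both families comes from the decomposition $\dot S=(\lambda_i-\lambda_1)S_x+g$, $\dot R=(\lambda_i-\lambda_2)R_x-g$ together with $S_x,R_x\ge 0$ (already established in the proof of Theorem~\ref{GC}), the monotonicity of $\xi$ follows from that of $R/S$, and the equivalence $\xi\to 0\iff\rho\to 0$ is obtained by squeezing $\xi$ between multiples of $R-S$ using $0<S(x_0,t_0)\le S\le M$. The paper writes the same squeeze in terms of $\rho^{(\gamma-1)/2}$ rather than $R-S$, which is only a cosmetic difference since these are proportional by \eqref{RI}; your explicit identification of the lower bound on $S$ along the characteristic as the crucial point matches the role of $S(x_i(t_*),t_*)$ in the paper's displayed inequality.
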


\begin{proof}
In the proof of Theorem \ref{GC}, we have already showed that $S$ is increasing and $R$ is decreasing along both $\Gamma^{1}$ and $\Gamma^{2}$.
By the definition of $\xi$, it is obvious that $\xi(x,t)$ is also decreasing along both $\Gamma^{1}$ and $\Gamma^{2}$.
Moreover, if $\Gamma^{i}$ passing through $(x_i(t),t)$ emanates from $(x_i(t_*),t_*)$ for $i=1,2$, then
\begin{equation*}
4 \sqrt{\frac{\nu}{\gamma-1}} \frac{\rho^{\frac{\gamma-1}{2}} (x_i(t),t)}{M} \leq
\xi(x_i(t),t) \leq
4 \sqrt{\frac{\nu}{\gamma-1}} \frac{\rho^{\frac{\gamma-1}{2}} (x_i(t),t)}{S(x_i(t_*), t_*)},
\end{equation*}
where $M$ is given in Lemma \ref{2.1}, which shows the equivalence of $\xi \rightarrow 0$ and $\rho \rightarrow 0$ along $\Gamma^{i}$.
The proof is finished.
\end{proof}

\subsection{Spherical symmetric supersonic flows of the compressible Euler equations on bounded domains.}

As an application of Theorem \ref{GC}, we consider the initial-boundary value problem \eqref{a.8} for $N$-dimensional spherical symmetric supersonic flow of the compressible Euler equations, where $a(x) = x^{N-1}$. By the definition of $k(x)$ given in \eqref{k}, we have
\begin{equation}
\label{k(x)}
k(x) = \frac{N-1}{x}.
\end{equation}
Similarly, we rewrite this problem into \eqref{RIE} in terms of the Riemann invariants $R$ and $S$ as described in Subsection 2.1.
In the following we show how Theorem \ref{GC} can be used to obtain the global existence of the $C^1$ solution for \eqref{RIE}.

We let $x_B >0$ and recall $I = [x_B, x_C]$ or $[x_B, \infty)$. Then it is clear from \eqref{k(x)} that $k(x) \in C^1(I)$ has a bounded $C^1$ norm. Under the assumption $1<\gamma<3$, i.e., $b<0$, where $b$ is defined in \eqref{h}, it is of interest to note that inequality \eqref{k1} is equivalent to
$$
1<\gamma<1 + \frac{2}{N} \ \ \Big(\text{i.e.,} \ \  b < -\frac{N-1}{2} \Big).
$$
This coincides with the adiabatic index for the polyatomic ideal gas. Next, if $S_0(x)$ and $R_0(x)$ satisfy condition ${\rm (A3)}$ and the following
\begin{equation}
\label{initial1}
\frac{(N-1)S_0(x)}{-bx} \leq S'_0(x) \ \ \text{and} \ \ \frac{(N-1)R_0(x)}{-bx} \leq R'_0(x)
\end{equation}
for all $x \in I$, then as $\nu$ is sufficiently small, $S_0(x)$ and $R_0(x)$ also satisfy condition ${\rm (A3)}$, \eqref{S0}, and \eqref{R0} for all $x \in I$. By inequalities in \eqref{initial1}, we see that for all $x \in I$
\begin{equation}
\label{initial2}
x^{\frac{N-1}{b}}_B S_0(x_B) \ x^{\frac{N-1}{-b}} \leq S_0(x) \ \ \text{and} \ \ x^{\frac{N-1}{b}}_B R_0(x_B) \ x^{\frac{N-1}{-b}} \leq R_0(x).
\end{equation}
Since $S_0(x)$ and $R_0(x)$ have bounded $C^0$ norms, inequalities in \eqref{initial2} force us to consider a finite interval $I = [x_B, x_C]$ for some $x_C < \infty$. On the other hand, using condition ${\rm (A5)}$ and Lemma \ref{2.1}, we have
\begin{equation}
\label{initialR}
v(x,t) \leq R_{0}(x_C)
\end{equation}
for all $(x,t) \in [x_B, x_C] \times [0, \infty)$. In physics, no speed cannot exceed the light speed $c$. Hence we suppose further that
\begin{equation}
\label{<c}
R_{0}(x_C)<c.
\end{equation}
Then $v(x,t) < c$ for all $(x,t) \in [x_B, x_C] \times [0, \infty)$ by \eqref{initialR} and \eqref{<c}. With the preceding arguments and Theorem \ref{GC} in mind, we then have the following result.

\begin{thm}\label{GC2}
Consider \eqref{RIE} with $1 < \gamma < 1 + 2/N$, $k(x) = (N-1)/x$, and  $I=[x_B, x_C]$, where $0<x_B<x_C$, and $x_C$ satisfies the inequalities in \eqref{initial2} and \eqref{<c}. Suppose that $S_0(x)$, $R_0(x)$, $S_B(t)$, $R_B(t)$ satisfy conditions ${\rm (A1)}$--${\rm (A5)}$, \eqref{SB}, and \eqref{RB}. Then \eqref{RIE} admits a unique global $C^1$ solution on $[x_B, x_C] \times [0, \infty)$ for sufficiently small $\nu$.
\end{thm}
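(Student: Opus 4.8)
The plan is to prove Theorem \ref{GC2} as a direct application of Theorem \ref{GC}: the whole content is to check that the spherical profile $a(x)=x^{N-1}$, for which $k(x)=(N-1)/x$ by \eqref{k(x)}, together with the stated data meets every hypothesis of Theorem \ref{GC} once $\nu$ is taken small. First I would record condition ${\rm (A1)}$: since $0<x_B\le x\le x_C$, the functions $k(x)=(N-1)/x$ and $k'(x)=-(N-1)/x^2$ are continuous and bounded on the compact interval $I=[x_B,x_C]$, so $k\in C^1(I)$ has bounded $C^1$ norm; note also that $k$ is decreasing, as required in the proof of Theorem \ref{GC}.

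The key step is to verify inequality \eqref{k1}. Substituting $k(x)=(N-1)/x$, $k'(x)=-(N-1)/x^2$ and clearing the common factor $(N-1)/x^2>0$ reduces \eqref{k1} to $-1\le \frac{N-1}{(2-\delta)b}$; since $b<0$ this is equivalent to $-b\ge \frac{N-1}{2-\delta}$. As $\delta$ ranges over $(0,2)$, such a $\delta$ exists precisely when $-b>\frac{N-1}{2}$, i.e. $b<-\frac{N-1}{2}$, and using $b=-\frac{3-\gamma}{2(\gamma-1)}$ from \eqref{h} this is in turn equivalent to $\gamma<1+\frac{2}{N}$. Thus the hypothesis $1<\gamma<1+2/N$ is exactly what makes \eqref{k1} solvable for some admissible $\delta$; the strictness of the upper bound on $\gamma$ is essential, because $\frac{N-1}{2-\delta}\to\frac{N-1}{2}$ only as $\delta\to0^+$.

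It remains to supply the data conditions of Theorem \ref{GC}. Conditions ${\rm (A2)}$--${\rm (A5)}$ and inequalities \eqref{SB}, \eqref{RB} are assumed outright. For \eqref{S0}--\eqref{R0} I would invoke the discussion preceding the theorem: with $k(x)=(N-1)/x$ the simplified sufficient condition \eqref{k2} of Lemma \ref{key3} becomes exactly \eqref{initial1}, so that \eqref{initial1} together with ${\rm (A3)}$ guarantees \eqref{S0} and \eqref{R0} for all sufficiently small $\nu$. Integrating \eqref{initial1} yields the lower bounds \eqref{initial2}; since $S_0,R_0$ have bounded $C^0$ norms by ${\rm (A3)}$, these bounds can hold only on a finite interval, which is why $I=[x_B,x_C]$ with $x_C<\infty$ is forced, and $x_C$ is chosen to respect \eqref{initial2} and the physical cap \eqref{<c}. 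With all hypotheses of Theorem \ref{GC} verified, that theorem delivers a unique global $C^1$ solution of \eqref{RIE} on $I\times[0,\infty)$ for $\nu$ small.

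The main obstacle is structural bookkeeping rather than analysis: one must confirm that Theorem \ref{GC} is genuinely available on a bounded interval. Here the crucial point is that Lemma \ref{2.1} gives $\lambda_1>0$, so the flow is supersonic and both characteristic families travel rightward; hence $x=x_C$ is an outflow boundary carrying no data, and every characteristic through a point of $[x_B,x_C]\times[0,T]$ traces back either to $t=0$ or to the inflow boundary $x=x_B$, exactly as assumed in the proof of Theorem \ref{GC}. Moreover the integral estimates in that proof, which on the half-line rely on $\int_{x_B}^{\infty}k<\infty$ from Remark \ref{remark1}, hold automatically here since $k$ is continuous on the compact set $[x_B,x_C]$. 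Once these points are checked, the conclusion is immediate.
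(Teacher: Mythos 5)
Your proposal is correct and takes essentially the same route as the paper: the paper ``proves'' Theorem \ref{GC2} through the discussion immediately preceding it, namely checking that $k(x)=(N-1)/x$ satisfies ${\rm (A1)}$, that inequality \eqref{k1} holds for some admissible $\delta$ exactly when $b<-(N-1)/2$, i.e.\ $1<\gamma<1+2/N$, that \eqref{initial1} (the specialization of \eqref{k2}) gives \eqref{S0}--\eqref{R0} for small $\nu$ and, via \eqref{initial2} and \eqref{<c}, forces the bounded interval $[x_B,x_C]$, and then invoking Theorem \ref{GC}. Your closing remarks on the outflow character of $x=x_C$ (both characteristic speeds positive, so no data are needed there) and on the integral estimates holding trivially on a compact interval are points the paper leaves implicit, and they strengthen rather than alter the argument.
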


\section{Numerical experiments}
\setcounter{section}{4}

In this section, we present several numerical experiments to validate our theoretical results. In the following experiments, we set the domain $I$ to be $[1, 10]$ so that $x_B =1$, and we choose $\gamma=7/5$, i.e., $b =-2$ via \eqref{h}. All the numerical results are computed by the 5th-order WENO scheme \cite{JS1996} with the 3rd-order TVD Runge-Kutta method in time to solve the initial-boundary value problem \eqref{a.8}. \\

\noindent {\bf Experiment 1. $a(x)=2-(1/x)$}. \\

In the first numerical experiment, we consider the case $a(x)=2-(1/x)$, i.e., $k(x) = 1/(2x^2 -x)$ by recalling $k(x) = a'(x)/a(x)$. Note that $k(x_B)=a(x_B)=1$ since $x_B =1$. By Remark \ref{remark1}, we know that the function $k$ satisfies condition $\eqref{k1}$. For the boundary data (BD), we set
\begin{equation}
\label{BD}
\text{BD}: \ \ \
S_{B}(t) = \frac{1 - \sqrt{\nu}}{1 + t} \ \ \text{and} \ \  R_{B}(t) = \frac{1}{1+t}.
\end{equation}
It is easy to check that the boundary data satisfy $\eqref{k3}$. For the initial data (ID), we choose $s_0=S_B(0)=1-\sqrt{\nu}$ and $r_0=R_B(0)=1$, and set $s_0'$ and $r_0'$ by condition ${\rm (A2)}$ as follows:
\begin{equation*}
s'_0 = \frac{\frac{\gamma -1}{8} k(x_B)( r_0^2 - s_0^2)-S'_B(0)}{\frac{\gamma +1}{4} s_0  + \frac{3-\gamma}{4} r_0} = \frac{20 - 18 \sqrt{\nu} - \nu}{20 - 12 \sqrt{\nu} }
\end{equation*}
and
\begin{equation*}
r'_0 = \frac{\frac{1-\gamma }{8} k(x_B) ( r_0^2 - s_0^2)-R'_B(0)}{\frac{\gamma +1}{4} r_0  + \frac{3-\gamma}{4} s_0} = \frac{20 - 2 \sqrt{\nu} + \nu}{20 - 8 \sqrt{\nu} }.
\end{equation*}
Then we define
\begin{equation}
\label{ID}
\text{ID}: \ \ \
S_{0}(x) = s_0 a(x)^{\frac{s_0'}{s_0}} \ \ \text{and} \ \  R_{0}(x) = r_0 a(x)^{\frac{r_0'}{r_0}}.
\end{equation}
In this case, we have
\begin{equation*}
\frac{S_0(x)k(x)}{-b} \leq S'_0(x) = S_0(x)k(x)\frac{s'_0}{s_0} \ \ \text{and} \ \ \frac{R_0(x)k(x)}{-b} \leq R'_0(x) = R_0(x)k(x)\frac{r'_0}{r_0}.
\end{equation*}
Hence condition $\eqref{k2}$ holds. In addition, from the above settings, it is a routine matter to check that conditions ${\rm (A1)}$--${\rm (A5)}$ are satisfied. In particular, conditions ${\rm (A3)}$ and ${\rm (A4)}$ show that the initial and boundary data of the density $\rho$ and the velocity $v$ obtained by \eqref{rhov} are $C^1$-bounded whenever $\nu \leq 0.1$. We solve \eqref{a.8} with $\nu=0.1, 0.001, 0.00001$ until $t=10$. Space grid size is set to be $\Delta x=0.01$ and time step is set to be $\Delta t = 0.1 \Delta x$.  We monitor $\|\rho(\cdot,t)\|_\infty$, $\|v(\cdot,t)\|_\infty$, $\|\rho_x(\cdot,t)\|_\infty$, and $\|v_x(\cdot,t)\|_\infty$ and plot them in Figure \ref{Fig.1}. The $C^1$ norms of the solutions are bounded at all times as we have shown in Theorem \ref{GC}. We also observe that the solutions seem to be convergent as $\nu$ tends to $0$. This coincides with the results in \cite{Lee}.

\begin{figure}[h]
\centering
\includegraphics[scale=0.9]{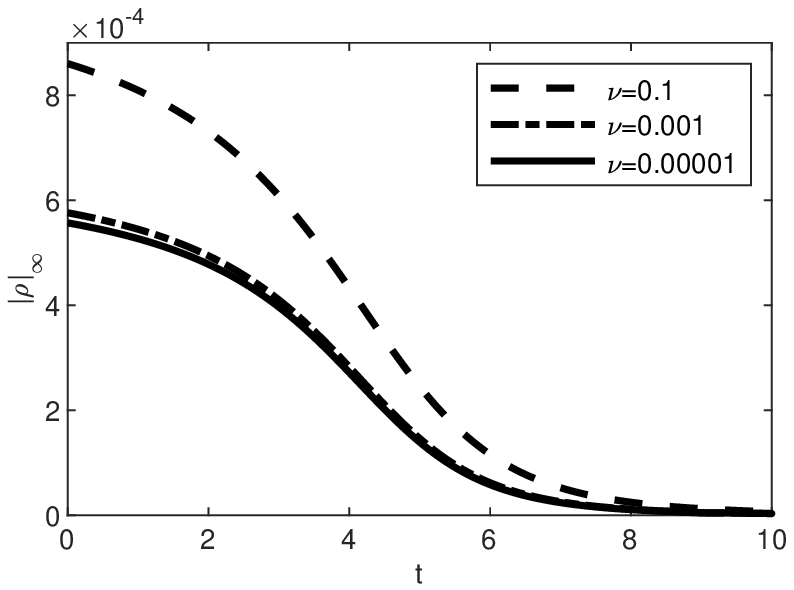}
\includegraphics[scale=0.9]{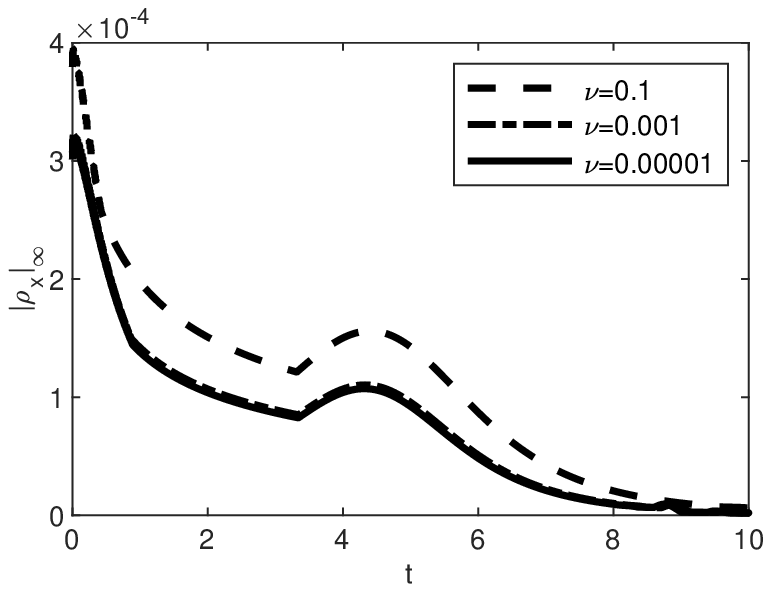}\\
\includegraphics[scale=0.9]{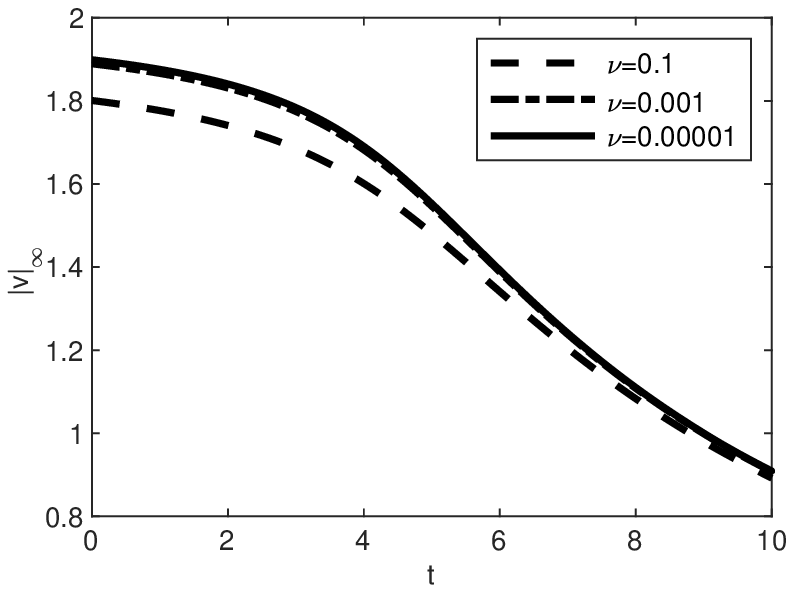}
\includegraphics[scale=0.9]{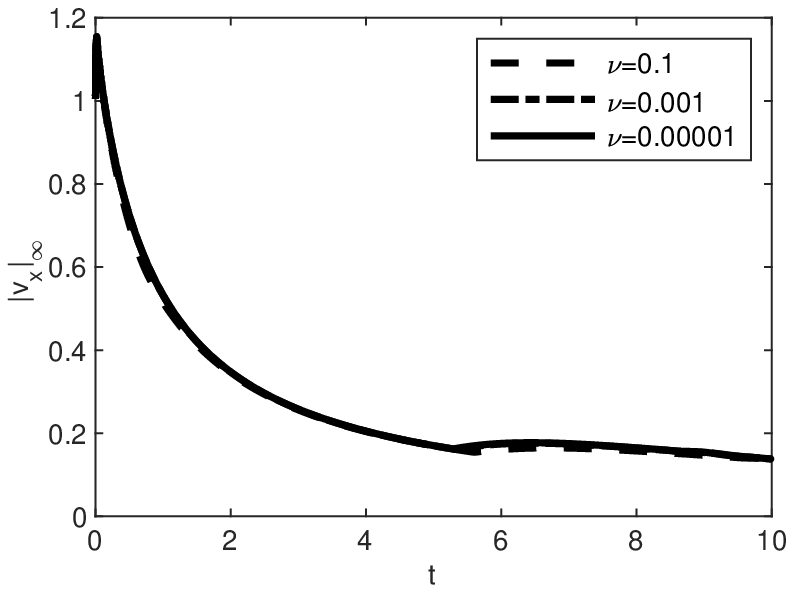}
\caption{Numerical Experiment 1 with $a(x)=2-(1/x)$. $\|\rho(\cdot,t)\|_\infty$, $\|v(\cdot,t)\|_\infty$, $\|\rho_x(\cdot,t)\|_\infty$, and $\|v_x(\cdot,t)\|_\infty$  from $t=0$ to $t=10$ for $\nu=0.1, 0.001, 0.00001$.}\label{Fig.1}
\end{figure}

\noindent {\bf Experiment 2. $a(x)=e^{1-(1/x)}$}. \\

In the second numerical experiment, we consider the case $a(x)=e^{1-(1/x)}$, i.e., $k(x) = 1/x^2$.
Since $x_B =1$, we have $k(x_B)=a(x_B)=1$. In addition, we have seen from Remark \ref{remark1} that the function $k$ satisfies the condition $\eqref{k1}$.
For the initial and boundary data, we use the same setting as in Experiment 1. That is, \eqref{BD} and \eqref{ID} are given. As a consequence, it can be seen from the above that all the conditions required by Theorem \ref{GC} are satisfied. We solve \eqref{a.8} with $\nu=0.1, 0.001, 0.00001$ until $t=10$ and $\Delta x=0.01$, $\Delta t = 0.1 \Delta x$.  We monitor $\|\rho(\cdot,t)\|_\infty$, $\|v(\cdot,t)\|_\infty$, $\|\rho_x(\cdot,t)\|_\infty$ and $\|v_x(\cdot,t)\|_\infty$, and plot them in Figure \ref{Fig.2}. Again, $C^1$ norms of the solutions are bounded at all times, and the solutions seem to be convergent as $\nu$ tends to 0.

\begin{figure}[h]
\centering
\includegraphics[scale=0.9]{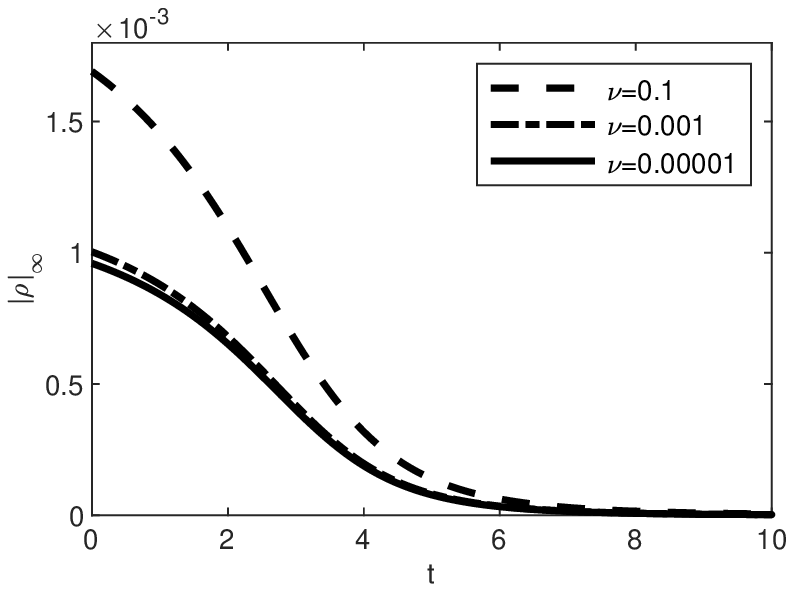}
\includegraphics[scale=0.9]{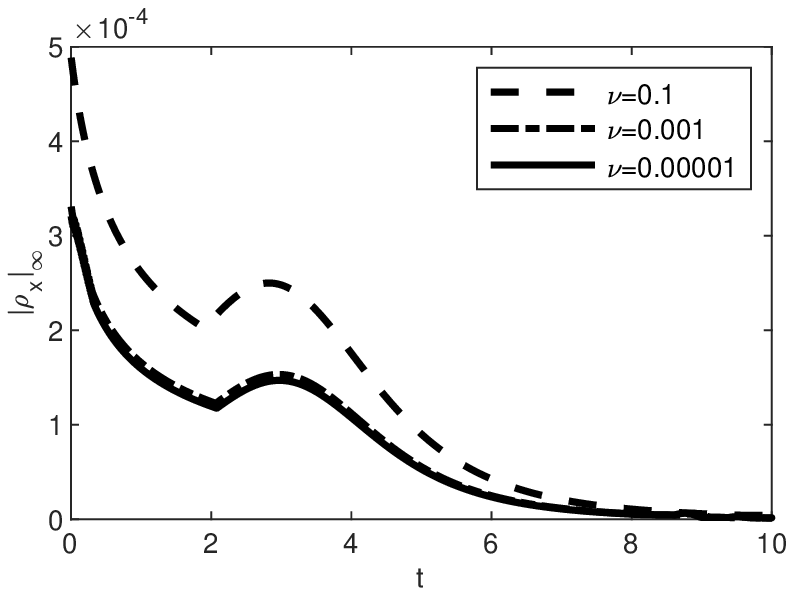}\\
\includegraphics[scale=0.9]{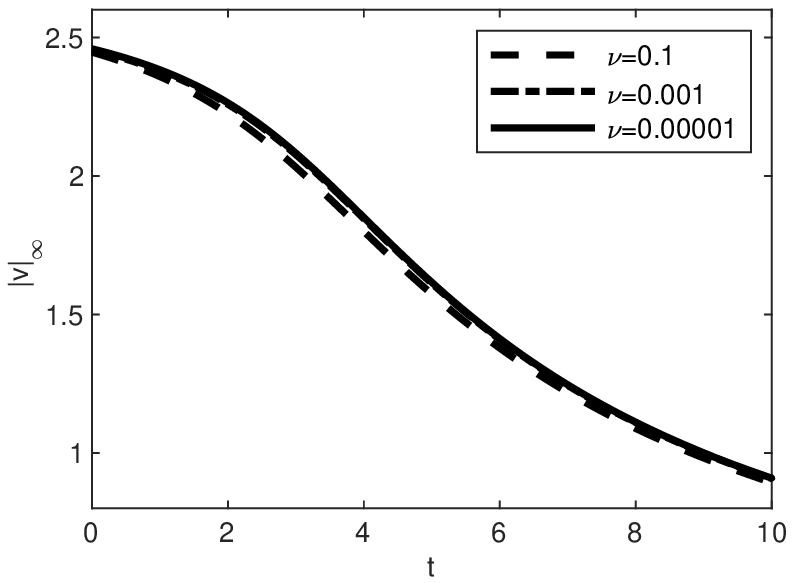}
\includegraphics[scale=0.9]{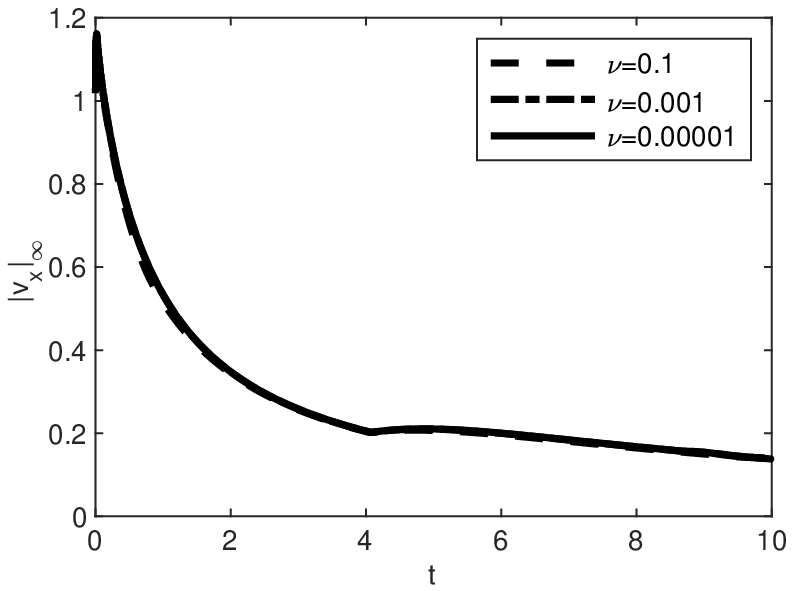}
\caption{Numerical Experiment 2 with $a(x)=e^{1-(1/x)}$. $\|\rho(\cdot,t)\|_\infty$, $\|v(\cdot,t)\|_\infty$, $\|\rho_x(\cdot,t)\|_\infty$, and $\|v_x(\cdot,t)\|_\infty$  from $t=0$ to $t=10$ for $\nu=0.1, 0.001, 0.00001$.}\label{Fig.2}
\end{figure}

\section*{Acknowledgments}

The work of  Ying-Chieh Lin is supported in part by Ministry of Science and Technology of Taiwan under research grant MOST-109-2115-M-390-001-MY2. Jay Chu is partially supported by Ministry of Science and Technology of Taiwan under research grant MOST-109-2115-M-007-016-. John M. Hong is partially supported by Ministry of Science and Technology of Taiwan under research grant MOST-108-2115-M-008-005-MY2. Hsin-Yi Lee is partially supported by the Ministry of Science and Technology of Taiwan under research Grant MOST-109-2115-M-008-013-MY2. The authors thank Prof. Hailiang Liu at the Department of Mathematics, Iowa State University for introducing the generalized Lax transformations to them.

\end{document}